\documentclass[10pt,reqno]{amsart}

\usepackage[T1]{fontenc}
\usepackage{lmodern}
\usepackage{microtype}
\usepackage{amsmath,amssymb,mathrsfs,mathtools}
\usepackage{booktabs}
\usepackage{enumitem}
\usepackage{needspace}
\usepackage{xcolor}
\usepackage[colorlinks=true,linkcolor=blue!55!black,
  citecolor=blue!55!black,urlcolor=blue!55!black]{hyperref}
\usepackage{cleveref}
\usepackage{aliascnt}

\allowdisplaybreaks[3]
\setlist{nosep,leftmargin=2em}

\newtheorem{theorem}{Theorem}[section]
\newaliascnt{proposition}{theorem}
\newtheorem{proposition}[proposition]{Proposition}
\aliascntresetthe{proposition}
\newaliascnt{lemma}{theorem}
\newtheorem{lemma}[lemma]{Lemma}
\aliascntresetthe{lemma}
\newaliascnt{corollary}{theorem}
\newtheorem{corollary}[corollary]{Corollary}
\aliascntresetthe{corollary}

\theoremstyle{definition}

\crefname{proposition}{Proposition}{Propositions}
\crefname{theorem}{Theorem}{Theorems}
\crefname{lemma}{Lemma}{Lemmas}
\crefname{corollary}{Corollary}{Corollaries}

\newcommand{\Q}{\mathbb Q}
\newcommand{\Z}{\mathbb Z}
\newcommand{\E}{\mathbb E}
\newcommand{\Prob}{\mathbb P}
\newcommand{\F}{\mathbb F}

\newcommand{\R}{\mathbb R}

\newcommand{\Gal}{\operatorname{Gal}}

\newcommand{\rad}{\operatorname{rad}}

\newcommand{\ord}{\operatorname{ord}}
\newcommand{\Gcal}{G}
\newcommand{\Pcal}{P}
\newcommand{\Li}{\operatorname{Li}}
\newcommand{\eps}{\varepsilon}
\newcommand{\Nm}{\operatorname N}

\title[Density of Vanishing of Certain Eigenspaces of Cyclotomic Class Groups]
{Density of Vanishing of Certain Eigenspaces of Cyclotomic Class Groups}

\author[X. Guo]{Xuejun Guo}
\address{School of Mathematics, Nanjing University, Nanjing 210093,
People's Republic of China}
\email{guoxj@nju.edu.cn}

\author[Z. Tao]{Zhengyu Tao}
\address{School of Mathematics, Hefei University of Technology,
Hefei 230009, People's Republic of China}
\email{taozhy@hfut.edu.cn}
\date{}

\hypersetup{
  pdftitle={Density of Vanishing of Certain Eigenspaces of Cyclotomic Class Groups},
  pdfauthor={Xuejun Guo and Zhengyu Tao}
}

\subjclass[2020]{Primary 11R18, 11M06; Secondary 11L40, 11R23}
\keywords{Vandiver's conjecture, cyclotomic fields, Dirichlet $L$-functions,
random Euler products, generalized Bernoulli numbers, reflection theorem}

\begin{document}

\begin{abstract}
For an odd prime $p$, let $A_j$ be the $\omega^j$-eigenspace of the
$p$-primary class group of $\mathbb Q(\zeta_p)$.
Fix an even integer $d\ge4$, put $N=(p-1)/d$, and let
$U_d=(\mathbb Z/d\mathbb Z)^\times$.
For a relative density-one set of primes $p\equiv d+1\pmod{2d}$,
we prove that the odd block $\bigoplus_{a\in U_d}A_{aN}$ has order
at most $p^{\varphi(d)/2-1}$, and reflection shows that the even block
$\bigoplus_{a\in U_d}A_{p-aN}$ has $p$-rank at most
$\varphi(d)/2-1$.  For each \(d\in\{4,6\}\), both blocks vanish for a relative density-one set of primes \(p\equiv d+1\pmod{2d}\); in particular, the even components vanish in accordance with Vandiver's conjecture.
For each \(d\in\{8,10,12\}\), the odd block has order at most \(p\) for a relative density-one set of primes in the same progression, so every summand \(A_{aN}\), \(a\in U_d\), is cyclic, in accordance with Iwasawa's cyclicity conjecture. The proof uses an atomless limiting law for products of Dirichlet
$L$-values, integrality of generalized Bernoulli norms, the relative
class-number formula and reflection.
A separate exact computation proves $A_{34}=0$ for every odd
prime $p$; a single-file PARI/GP program reproduces the calculation.
\end{abstract}

\maketitle

\section{Introduction}

Let $p$ be an odd prime, and let $A$ be the $p$-primary class
group of $K_p=\Q(\zeta_p)$.
For $\sigma_a(\zeta_p)=\zeta_p^a$, let
$\omega(\sigma_a)\in\Z_p^\times$ be the Teichm\"uller lift of
$a\in(\Z/p\Z)^\times$.
Put
\[
 e_j=\frac1{p-1}\sum_{a=1}^{p-1}\omega(a)^{-j}\sigma_a,
 \qquad A_j=e_jA.
\]
Indices are taken modulo $p-1$.
If $A^+$ is the $p$-primary class group of the maximal real
subfield $K_p^+$, extension of ideals gives
\[
 A=\bigoplus_{j=0}^{p-2}A_j,\qquad
 A^+\simeq
 \bigoplus_{\substack{0\le j\le p-2\\j\text{ even}}}A_j.
\]
Vandiver's conjecture is the assertion
\begin{equation}\label{eq:vandiver-eigenspaces}
 A_j=0\qquad(j\text{ even}).
\end{equation}
Iwasawa's cyclicity conjecture asserts that $A_j$ is cyclic
for every odd $j$; see \cite[p.~223]{Kurihara}.

Hart, Harvey and Ong \cite{HartHarveyOng} verified Vandiver's conjecture
for all primes $p<2^{31}$.
Several individual components are known to vanish.
Reflection gives $A_{(p+1)/2}=0$ for $p\equiv3\pmod4$;
see also \cite{Osburn,QS}.
Kurihara \cite{Kurihara} proved $A_{p-3}=0$.
The vanishing of $K_8(\Z)$ proved in \cite{DutourSikiric2019}
implies $A_{p-5}=0$.
More generally, Soul\'e \cite[Section~3.2]{SoulePerfectForms}
proved that, for every odd integer $n\ge5$,
\[
 A_{p-n}=0\qquad\text{whenever}\qquad
 \log p>n^{224n^4}.
\]
His argument uses algebraic $K$-theory and Voronoi's reduction theory
of positive definite quadratic forms.
Ribet \cite{Ribet1976} proved the converse to Herbrand's theorem
in 1976, completing the criterion
\[
  A_{p-2k}\ne0
  \quad\Longleftrightarrow\quad
  p\mid\operatorname{num}(B_{2k})
  \qquad (2\le2k\le p-3).
\]
The Herbrand direction, together with the reflection theorem,
implies that $A_{2k}=0$ whenever
$p\nmid\operatorname{num}(B_{2k})$.
For fixed even indices, Zhang and Sun \cite{ZS} proved
$A_{2i}=0$ for $1\le i\le14$, and Qin \cite{Q} extended
this range to $1\le i\le16$.
Chen et al.~\cite{ChenEtAl} proved that, for each $\alpha>1/2$,
a density-one set of primes satisfies
\[
  A_{p-2k}=A_{2k}=0
  \qquad\left(2\le2k\le\frac{\sqrt p}{(\log p)^\alpha}\right).
\]

In the function-field setting, Taelman
\cite{TaelmanHerbrandRibet} established a Herbrand--Ribet
theorem for the Carlitz module in 2012.
However, the corresponding Kummer--Vandiver analogue is false:
Angl\`es and Taelman \cite{AnglesTaelmanVandiver} gave explicit
counterexamples using Artin--Schreier base change in 
2013.
For an even integer $d\ge4$, define
\[
 \Pcal_d=\{p\text{ prime}:p\equiv d+1\pmod{2d}\},\qquad
 \pi_d(X)=\#\{p\le X:p\in\Pcal_d\},
\]
and represent $(\Z/d\Z)^\times$ by
\[
 U_d=\{a:1\le a<d,\ \gcd(a,d)=1\}.
\]
For $p\in\Pcal_d$, $N=(p-1)/d$ is odd.
A set $S_d\subseteq\Pcal_d$ has relative density one if
$\#\{p\le X:p\in S_d\}/\pi_d(X)\to1$.
For a finite abelian $p$-group $M$, put
$r_p(M)=\dim_{\F_p}(M/pM)$.

\begin{theorem}\label{th1.1}
Fix an even integer $d\ge4$.
There is a set $S_d\subseteq\Pcal_d$ of relative density one
such that, for $p\in S_d$ and $N=(p-1)/d$,
\begin{equation}\label{theorem1.1}
 \left|\bigoplus_{a\in U_d}A_{aN}\right|
 \le p^{\varphi(d)/2-1}.
\end{equation}
In particular, by the reflection theorem, for every $p\in S_d$,
\[
 r_p\!\left(\bigoplus_{a\in U_d}A_{p-aN}\right)
 \le\frac{\varphi(d)}2-1.
\]
\end{theorem}

\begin{corollary}\label{cor1.2}
Let $N=(p-1)/d$.
\begin{enumerate}
\item For $d=4,6$, a relative density-one set of primes
$p\in\Pcal_d$ satisfies
\[
 \bigoplus_{a\in U_d}
       \bigl(A_{aN}\oplus A_{p-aN}\bigr)=0.
\]
\item For $d=8,10,12$, a relative density-one set of primes
$p\in\Pcal_d$ has odd block
$\bigoplus_{a\in U_d}A_{aN}$ trivial or isomorphic to $\Z/p\Z$.
\end{enumerate}
\end{corollary}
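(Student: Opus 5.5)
Corollary~\ref{cor1.2} follows from Theorem~\ref{th1.1} by specializing $d$; apart from evaluating $\varphi(d)$, the only extra work is converting bounds on the odd block into vanishing of the even block via reflection.

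For part (1), take $d\in\{4,6\}$. Then $\varphi(d)=2$, so the bound \eqref{theorem1.1} reads
\[
\Bigl|\bigoplus_{a\in U_d}A_{aN}\Bigr|\le p^{0}=1
\]
for all $p\in S_d$. Hence the odd block vanishes. The rank statement in Theorem~\ref{th1.1} gives
\[
r_p\Bigl(\bigoplus_{a\in U_d}A_{p-aN}\Bigr)\le 0.
\]
A finite abelian $p$-group $M$ with $r_p(M)=0$ satisfies $M=pM$, so $M=0$ by Nakayama (or iterating $M=p^kM$). Thus the even block vanishes as well, and $S_d$ is the required relative density-one set.

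The parity of the indices should be checked. Since $p\in\Pcal_d$, $N$ is odd, and $a\in U_d$ is odd because $d$ is even. So $aN$ is odd and $p-aN$ is even, and the second block really consists of even eigenspaces, as the Vandiver remark requires. The reflection theorem used is the standard inequality $r_p(A_{p-j})\le r_p(A_j)$ for odd $j$. Since it is already packaged into Theorem~\ref{th1.1}, nothing further is needed.

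For part (2), take $d\in\{8,10,12\}$. Then $\varphi(d)=4$, so \eqref{theorem1.1} gives
\[
\Bigl|\bigoplus_{a\in U_d}A_{aN}\Bigr|\le p
\]
for $p\in S_d$. A $p$-group of order at most $p$ is either trivial or cyclic of order $p$, that is, isomorphic to $\Z/p\Z$. This is exactly the claim.

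As a consequence, each summand $A_{aN}$ is a subgroup of this block, so it is cyclic (of order $1$ or $p$). The rank bound $\le 1$ for the even block also follows, but it is not asked for here.

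There is no real obstacle: all the analytic work lives in Theorem~\ref{th1.1}. The only points needing care are the arithmetic $\varphi(4)=\varphi(6)=2$ and $\varphi(8)=\varphi(10)=\varphi(12)=4$, and the fact that rank zero forces a finite $p$-group to be trivial.
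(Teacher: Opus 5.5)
Your proposal is correct and follows the same route as the paper: specialize \cref{th1.1} using $\varphi(4)=\varphi(6)=2$ and $\varphi(8)=\varphi(10)=\varphi(12)=4$. You then read off trivial odd block plus rank-zero (hence trivial) even block in part (1), and an odd block of order at most $p$, hence trivial or $\Z/p\Z$, in part (2). Your extra parity check that $p-aN$ is even and your Nakayama remark are fine but not needed beyond what the paper already states.
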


For $d=4,6$, Thaine  \cite{Thaine1995,Thaine2002}  discussed the cases
$p\equiv 5\pmod 8$ and $p\equiv 7\pmod{12}$ by Gaussian
periods. In our notation, Thaine's idea is to show that, if \(A_{p-aN}\) is nontrivial, then every prime \(q\) satisfying \(\operatorname{ord}_p(q)=N\) must have a certain special form, with the hope of deriving a contradiction from a suitable version of Dirichlet's theorem on primes in arithmetic progressions.
Although Thaine showed that, in these two cases, the nontriviality of \(A_{(p+3)/4}\) or \(A_{(5p+1)/6}\) forces \(p\) to have a special form, this observation has not yet been successfully combined with any density theorem to yield a contradiction.

The proof rests on a divisibility obstruction supplied by a limiting
distribution.
For $p\in\Pcal_d$, $p>d+1$, and a character $\psi_p$ of exact
order $d$, define
\[
 G_{d,p}=\Nm_{\Q(\zeta_d)/\Q}(B_{1,\psi_p^{-1}}),
 \qquad m_d=\varphi(d)/2.
\]
We prove that $G_{d,p}$ is a positive integer and that
\[
 \left|\bigoplus_{a\in U_d}A_{aN}\right|=p^{v_p(G_{d,p})},
 \qquad
 \frac{G_{d,p}}{p^{m_d}}
 =\frac1{\pi^{\varphi(d)}}\prod_{a\in U_d}L(1,\psi_p^a).
\]
The product of $L$-values has an atomless limiting law.
Hence the normalized values $G_{d,p}/p^{m_d}$ lie in the
closed set $\Z_{>0}$ with relative frequency tending to zero.
Equivalently, $v_p(G_{d,p})\le m_d-1$ on a relative density-one set.
This gives the odd-block bound; reflection gives the even-block
rank bound.

As an independent computational result, we obtain the following fixed-index result, whose proof
is given in Appendix~\ref{app:A34-computation}.
\begin{theorem}\label{thm:A34-vanishing}
For every odd prime $p$, $A_{34}=0$.
\end{theorem}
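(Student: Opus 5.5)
The plan is to reduce the statement to the Herbrand--Ribet criterion plus reflection, leaving a finite and explicit set of exceptional primes, and then to handle those primes by computation. Recall that for an even index $j$ with $2\le j\le p-3$, reflection gives $r_p(A_j)\le r_p(A_{p-j})$. By Herbrand--Ribet (as recalled in the introduction), $A_{p-j}\neq 0$ if and only if $p\mid\operatorname{num}(B_j)$. Hence $A_{34}=0$ for every prime $p\ge 37$ with $p\nmid\operatorname{num}(B_{34})$. The first step is therefore to compute $B_{34}=2577687858367/6$ exactly in PARI/GP and factor the numerator. I expect it to be a single prime $p_0=2577687858367$, in which case $p_0$ is the only large prime left open.

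Next I would dispose of the small primes $p\le 37$, where the index $34$ must be reduced modulo $p-1$.
\begin{itemize}
\item If $34\equiv 0\pmod{p-1}$, then $A_{34}=A_0=0$, since the trivial eigenspace of $A$ is the $p$-part of the class group of $\Q$.
\item If the reduced index $j$ is even with $2\le j\le p-3$, the same reflection--Herbrand argument applies with $B_j$ in place of $B_{34}$. For $p<37$ it succeeds automatically, because these primes are regular.
\item For $p=37$ we have $34=p-3$, and Kurihara's theorem $A_{p-3}=0$ applies. One can also check directly that $37\nmid\operatorname{num}(B_{34})$.
\end{itemize}
The PARI/GP file would loop over all odd $p\le 37$, compute $34\bmod (p-1)$, and certify each case.

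The main obstacle is the single irregular pair $(p_0,34)$. Here $A_{p_0-34}\neq0$, so reflection alone gives no conclusion and we must show that the even component actually vanishes. I would use the standard Vandiver-type criterion from Washington, Ch.~8, as implemented by Buhler--Crandall--Ernvall--Mets\"ankyl\"a and by Hart--Harvey--Ong. It suffices to find an auxiliary prime $\ell=kp_0+1$ such that the $\omega^{34}$-projection of the cyclotomic units, reduced modulo a prime above $\ell$, is not a $p_0$-th power in $\F_\ell^\times$. Under that condition the cyclotomic-unit index in the $\omega^{34}$-part is prime to $p_0$, and so $A_{34}=0$.

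The difficulty is cost. Computed naively, this projection is a product over about $p_0/2\approx1.3\cdot10^{12}$ factors with Teichm\"uller-type exponents, and $p_0$ lies far beyond the $2^{31}$ range of existing tables. I would first try to shrink this work in two ways.
\begin{itemize}
\item Write the exponent $a^{p_0-1-34}\bmod p_0$ as a polynomial in a primitive root $g$, so that the product becomes a structured product over the cyclic group $\langle g\rangle$.
\item Rewrite the criterion through Gauss or Jacobi sums in $\F_\ell$, where Stickelberger's theorem makes the $\omega^{34}$-component explicit. The test then becomes a single discrete-log or power check in $\F_\ell^\times$, computable with fast polynomial arithmetic in $\tilde O(\sqrt{p_0})$ or at worst $\tilde O(p_0)$ operations.
\end{itemize}
If neither reduction is fast enough, the fallback is to exploit the specific small index $34$ rather than the prime. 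I would use Soul\'e- or Kurihara-style $K$-theoretic input, namely the finiteness and structure of $K_{66}(\Z)$ or of the corresponding \'etale cohomology group, in the hope that it pins down the $\omega^{34}$ component uniformly in $p$. This fallback is the step I am least sure of.
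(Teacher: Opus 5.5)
Your strategy is the paper's. Herbrand plus reflection leaves only the primes $p\ge37$ dividing $\operatorname{num}(B_{34})$, the primes below $37$ are regular, and the one irregular pair is settled by the cyclotomic-unit test of Washington \S8.3 and Buhler--Harvey at an auxiliary prime congruent to $1$ modulo the exceptional prime. There is, however, a factual error. The number $2577687858367$ is not prime: it equals $17\cdot151628697551$. Since $17$ is regular (and below $37$), the unique exceptional prime is $p_0=151628697551$, and that is where the test must be run. The product therefore has $(p_0-1)/2=75814348775\approx7.6\cdot10^{10}$ factors, not $1.3\cdot10^{12}$. Your plan to factor in PARI/GP would catch this, but your treatment of the remaining case is built on the wrong prime.

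The step you call the main obstacle needs no new idea, and the paper settles it by direct evaluation. It takes $q=12p_0+1$ and $z=4096$, which has order $p_0$ modulo $q$. It then computes $\bigl(\prod_{a\le(p_0-1)/2}(z^a-z^{-a})^{e(a)}\bigr)^{12}$ with $e(a)\equiv a^{-34}\pmod{p_0}$. Because $t=\ord_{p_0}(2)$ is odd, the elements $7^j2^i$ with $0\le j<85$ and $0\le i<t$ represent each pair $\{a,-a\}$ exactly once. The sign ambiguity disappears, since $e(-a)=e(a)$ and the twelfth power removes $\pm1$. Along each orbit, $z^a$ is updated by a single squaring and $e(a)$ by multiplication by $2^{-34}\bmod p_0$. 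The blocks run in parallel and the whole computation takes about three hours. Your first bullet is essentially this, with orbits of multiplication by $2$ in place of powers of a primitive root. The $\tilde O(\sqrt{p_0})$ Gauss/Jacobi-sum shortcut is unsupported and unnecessary.

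Your $K$-theoretic fallback cannot work as described. The $p$-part of $K_{66}(\Z)$ is governed by $B_{34}$ and corresponds to the odd component $A_{p-34}$, which is nonzero at $p_0$. It says nothing about $A_{34}$. The results of Kurihara and Soul\'e control even components $A_{p-n}$ for a fixed odd $n$, through $K_{2n-2}(\Z)$. The fixed index $34$ has the form $p-n$ only with $n=p-34$, which grows with $p$, so no single $K$-group handles $A_{34}$ uniformly. The proof must end with the explicit computation at $p_0$, which is exactly how the paper concludes.
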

The factorization
\[
 B_{34}=\frac{17\cdot151628697551}{6}
\]
and reflection reduce the proof to the prime $151628697551$.
At this prime, a cyclotomic-unit test gives a nonidentity residue
by exact modular arithmetic.
The accompanying PARI/GP program \texttt{verify\_A34.gp}
evaluates the complete product used in this computation.

Section~\ref{sec:local} establishes the local laws and constructs
the atomless random Euler product.
Section~\ref{sec:weak} proves the limiting law, identifies the
Bernoulli norm with the odd-block order, and deduces
\cref{th1.1,cor1.2}.
Appendix~\ref{app:A34-computation} gives the fixed-index computation.

\section{The random Euler product}\label{sec:local}

Fix an even integer $d\ge4$, and put
$K=\Q(\zeta_d)$ and $\mu_d=\{\zeta_d^j:0\le j<d\}$, where
$\zeta_d=e^{2\pi i/d}$.
Every $p\in\Pcal_d$ splits completely in $K$.
For a prime ideal $\mathfrak p\mid p$, reduction identifies $\mu_d$
with the subgroup of order $d$ in $\F_p^\times$.
Define the character $\chi_{\mathfrak p}$ modulo $p$ by
\[
 \chi_{\mathfrak p}(a)=\left(\frac a{\mathfrak p}\right)_d,\qquad
 \chi_{\mathfrak p}(a)\equiv a^{(p-1)/d}\pmod{\mathfrak p}
 \quad(p\nmid a),
\]
and set $\chi_{\mathfrak p}(a)=0$ for $p\mid a$.
It has exact order $d$ and satisfies
$\chi_{\mathfrak p}(-1)=(-1)^{(p-1)/d}=-1$; hence it is odd and
primitive.

For $c\in\Q^\times$, choose $\alpha\in\overline{\Q}$
with $\alpha^d=c$, and let $\eta_{d,c}$ be the finite-order Hecke
character corresponding to the Kummer character
\[
 \Gal(K(\alpha)/K)\longrightarrow\mu_d,
 \qquad \sigma\longmapsto\frac{\sigma(\alpha)}{\alpha}.
\]
It is independent of the choice of $\alpha$ and depends only on the
class of $c$ modulo $\Q^{\times d}$.
For $p\in\Pcal_d$ with $v_p(c)=0$ and every $\mathfrak p\mid p$,
we have
$
 \eta_{d,c}(\mathfrak p)=\chi_{\mathfrak p}(c).
$
For a Hecke character $\vartheta$, write $\mathfrak f(\vartheta)$ for
its finite conductor ideal and 
$\Nm(\mathfrak f(\vartheta))=
 |\mathcal O_K/\mathfrak f(\vartheta)|$ for its absolute norm.
Throughout this section, let
\[
 S=\{\ell\text{ prime}:\ell\mid d\}.
\]

\begin{lemma}\label{lem:conductor}
For every $c\in\Q^\times$,
\begin{equation}\label{eq:conductor}
 \Nm(\mathfrak f(\eta_{d,c}))
 \le \bigl(d^2\rad(d)\bigr)^{\varphi(d)}
 \prod_{\substack{\ell\notin S\\ d\nmid v_\ell(c)}}\ell^{\varphi(d)}.
\end{equation}
Here $\rad(d)=\prod_{\ell\mid d}\ell$.
For $\ell\notin S$, the local conductor exponent at every
$\mathfrak l\mid\ell$ is $1$ if $d\nmid v_\ell(c)$ and $0$ otherwise.
\end{lemma}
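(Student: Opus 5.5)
We bound the conductor of the Kummer character $\eta_{d,c}$ prime by prime, working locally at each prime ideal $\mathfrak l$ of $K=\Q(\zeta_d)$. The global conductor is $\mathfrak f(\eta_{d,c})=\prod_{\mathfrak l}\mathfrak l^{f_{\mathfrak l}}$, where $f_{\mathfrak l}$ is the conductor exponent of the local character $\sigma\mapsto\sigma(\alpha)/\alpha$ on $\Gal(K_{\mathfrak l}(\alpha)/K_{\mathfrak l})$. Since $\eta_{d,c}$ depends only on $c$ modulo $\Q^{\times d}$, we may first replace $c$ by $\pm\prod_\ell \ell^{r_\ell}$ with $0\le r_\ell<d$. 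The factor $\pm1$ only affects places above $2$: $-1=\zeta_{2d}^d$ and $K(\zeta_{2d})/K$ is ramified at most above $2\in S$.

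\emph{Primes $\ell\notin S$.} Here $\ell\nmid d$, so $K_{\mathfrak l}/\Q_\ell$ is unramified and contains $\mu_d$. Write $c=\ell^{v}u$ with $u$ an $\ell$-adic unit. The extension $K_{\mathfrak l}(u^{1/d})$ is unramified, because $x^d-u$ is separable modulo $\mathfrak l$. The local character therefore factors as an unramified character times the character attached to $\ell^{v}$. By the standard tame theory, the character $\sigma\mapsto\sigma(\pi^{1/d})/\pi^{1/d}$ for a uniformizer $\pi=\ell$ restricts on inertia to the tame character $\mathcal O_{\mathfrak l}^\times\to\mu_d$, $x\mapsto \bar x^{-(\Nm\mathfrak l-1)/d}$ (up to sign convention). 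This is a surjective map onto $\mu_d$, so its $v$-th power is trivial on units iff $d\mid v$. Being tame, it is trivial on $1+\mathfrak l$. Hence $f_{\mathfrak l}=1$ if $d\nmid v_\ell(c)$ and $f_{\mathfrak l}=0$ otherwise, which is the last assertion of the lemma. Since $\ell$ is unramified in $K$, $\prod_{\mathfrak l\mid \ell}\Nm\mathfrak l=\ell^{\varphi(d)}$, giving the factor $\ell^{\varphi(d)}$ in \eqref{eq:conductor}.

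\emph{Primes $\ell\in S$.} This is the step I expect to be the main obstacle, because it involves wild ramification. I would bound $f_{\mathfrak l}$ through the discriminant rather than computing Artin conductors directly. The character has order dividing $d$ and cuts out a cyclic subextension $L_{\mathfrak l}\subseteq K_{\mathfrak l}(c^{1/d})$. By the conductor–discriminant formula, $f_{\mathfrak l}$ is at most $v_{\mathfrak l}$ of the relative discriminant of $K_{\mathfrak l}(c^{1/d})/K_{\mathfrak l}$. That extension is generated by a root of $x^d-c'$, where $c'$ is a suitable integral representative (we absorb powers of $\ell$, since $0\le r_\ell<d$). The different is then bounded by $f'(\alpha)=d\alpha^{d-1}$, so
\[
 v_{\mathfrak l}(\mathfrak d)\le v_{\mathfrak l}(d)+\tfrac{d-1}{d}v_{\mathfrak l}(c')+\text{(index correction)}.
\]
To get a clean bound, I would instead use the known estimate for Kummer conductors. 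Namely, $\mathfrak f$ divides $d\,\mathfrak l\,(1-\zeta_\ell)$-type expressions: for a character of order $\ell^k$ the exponent is at most $v_{\mathfrak l}(\ell^k)+v_{\mathfrak l}(\ell)/(\ell-1)+1$, which is the classical bound $f\le e(\frac{k}{1}+\frac1{\ell-1})+1$ from Hecke's theorem on Kummer extensions. Splitting the order-$d$ character into $\ell$-primary parts and summing, this yields
\[
 \prod_{\mathfrak l\mid\ell}\Nm\mathfrak l^{f_{\mathfrak l}}\le\bigl(\ell^{2v_\ell(d)}\ell\bigr)^{\varphi(d)},
\]
using $\sum_{\mathfrak l\mid\ell}e_{\mathfrak l}f(\mathfrak l)=\varphi(d)$ and generously bounding $1/(\ell-1)\le 1$ and $1\le e$. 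The product over $\ell\in S$ is $\bigl(d^2\rad(d)\bigr)^{\varphi(d)}$.

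Multiplying the bounds from $\ell\notin S$ and $\ell\in S$ gives \eqref{eq:conductor}. Places not dividing $c\,d$ are unramified, since $x^d-c$ is separable there, so they contribute nothing. The generous constant leaves ample room, so the delicate point is only correctly invoking the Hecke-type wild conductor bound, with care for $\ell=2$ and the sign of $c$.
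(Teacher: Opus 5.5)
Your proof is correct in outline and gives the same constant. The route differs from the paper's mainly at the primes $\ell\in S$.

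\textbf{Primes $\ell\in S$.} The paper does not split the local character into primary parts or appeal to Hecke. It applies Hensel's lemma to $g(X)=X^d-u$ at $X=1$: for $u\in 1+\mathfrak m^{2t+1}$ with $t=v(d)=e_{\mathfrak l}v_\ell(d)$, one has $v(g(1))\ge 2t+1>2v(g'(1))$. Hence $1+\mathfrak m^{2t+1}\subseteq F^{\times d}$. The local character has order dividing $d$, so it is trivial there, giving $a_{\mathfrak l}\le 2e_{\mathfrak l}v_\ell(d)+1$ directly for the whole order-$d$ character.

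Your route instead uses the sharper classical bound $ke+e/(\ell-1)+1$ for the $\ell$-primary part. This bound really comes from $(1+\mathfrak m^n)^{\ell^k}=1+\mathfrak m^{n+ke}$ for $n>e/(\ell-1)$. You then discard the extra sharpness anyway. So you rely on an external result and need tame/wild bookkeeping, while the paper's crude bound is self-contained and is all the lemma needs. The discriminant detour in your proposal is unnecessary and can be dropped.

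\textbf{Primes $\ell\notin S$.} Here the two arguments are equivalent. The paper detects ramification from $v(\alpha)=v_\ell(c)/d\notin\Z$ and tameness from $[E:F]\mid d$. You identify the inertia restriction of $\eta_{d,\ell}$ with the surjective tame symbol, which makes the ``$a_{\mathfrak l}=1$ iff $d\nmid v_\ell(c)$'' statement slightly more explicit.

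\textbf{The combination step needs a precise statement.} When you combine the $\ell$-primary part of the local character at $\mathfrak l\mid\ell$ with its prime-to-$\ell$ (tame) part, use that the conductor exponent of a product is at most the \emph{maximum} of the two exponents, not their sum. Write $e=e_{\mathfrak l}$, $k=v_\ell(d)$, and $f$ for the residue degree.
\begin{itemize}
\item Adding the exponents gives $a_{\mathfrak l}\le ke+e/(\ell-1)+2$, which overshoots your claimed bound. For example, take $d=6$ and $\ell=2$: here $2$ is inert in $\Q(\zeta_3)$, so $e=1$, $f=2$, $k=1$. The summed estimate gives total exponent $f\cdot 4=8>6=\varphi(d)(2k+1)$.
\item Taking the maximum gives $a_{\mathfrak l}\le ke+e+1\le 2ke+1$, using $1/(\ell-1)\le 1$ and $k\ge1$. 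Your relaxation $f\le ef$ then yields exactly $\ell^{\varphi(d)(2v_\ell(d)+1)}$.
\end{itemize}
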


\begin{proof}
For each prime ideal $\mathfrak l$ of $K$ above
a rational prime $\ell$, write $e_{\mathfrak l}=e(\mathfrak l/\ell)$,
$f_{\mathfrak l}=f(\mathfrak l/\ell)$, and
$a_{\mathfrak l}=a_{\mathfrak l}(\eta_{d,c})$.  Then
\[
 \Nm(\mathfrak f(\eta_{d,c}))
 =\prod_\ell\ell^{\sum_{\mathfrak l\mid\ell}
                                  f_{\mathfrak l}a_{\mathfrak l}}.
\]
Fix $\mathfrak l\mid\ell$, let $F=K_{\mathfrak l}$, and let
$\mathfrak m$ be the maximal ideal of $\mathcal O_F$.
Normalize the valuation $v$ on $F$ by $v(F^\times)=\mathbb Z$.
Choose $\alpha$ with $\alpha^d=c$ and put $E=F(\alpha)$.
Since $\mu_d\subset F$, the extension $E/F$ is cyclic of degree
dividing $d$, and the local Galois character defining $\eta_{d,c}$ is
the faithful character
\[
 \operatorname{Gal}(E/F)\longrightarrow\mu_d,
 \qquad \sigma\longmapsto\sigma(\alpha)/\alpha.
\]
Let $\theta_{\mathfrak l}:F^\times\to\mu_d$ be the corresponding
character under local reciprocity.  Its conductor exponent
$a_{\mathfrak l}$ is the least integer $n\ge0$ for which it is
trivial on $U_F^n$, where $U_F^0=\mathcal O_F^\times$ and
$U_F^n=1+\mathfrak m^n$ for $n\ge1$.

Suppose first that $\ell\notin S$.
Then $\ell\nmid d$, and $K/\Q$ is unramified at $\ell$, so
$e_{\mathfrak l}=1$ and $v(c)=v_\ell(c)$.
If $d\nmid v_\ell(c)$, the extension of $v$ to $E$ satisfies
\[
 v(\alpha)=\frac{v_\ell(c)}d\notin\mathbb Z.
\]
An unramified extension of $F$ has the same value group $\mathbb Z$,
so $E/F$ is ramified.  Its degree divides $d$ and is therefore
prime to $\ell$; hence $E/F$ is tamely ramified.
The faithful character above is nontrivial on inertia and trivial
on wild inertia.  Equivalently, $\theta_{\mathfrak l}$ is nontrivial
on $U_F^0$ and trivial on $U_F^1$, so $a_{\mathfrak l}=1$.

If $d\mid v_\ell(c)$, write $c=\pi^{dm}u$ with $m\in\mathbb Z$,
a uniformizer $\pi$ of $F$, and $u\in\mathcal O_F^\times$.
Then $E=F(\alpha/\pi^m)$ and $(\alpha/\pi^m)^d=u$.
Choose a finite extension $k'$ of the residue field of $F$ in which
$X^d-\bar u$ has a root, and let $F'/F$ be the unramified extension
with residue field $k'$.  This root is nonzero and simple, since
$\ell\nmid d$, so Hensel's lemma lifts it to a root $\beta\in F'$
of $X^d-u$.  The ratio $(\alpha/\pi^m)/\beta$ belongs to
$\mu_d\subset F$.
Thus $E=F(\beta)\subseteq F'$, so $E/F$ is unramified and
$a_{\mathfrak l}=0$.
These conclusions hold for every $\mathfrak l\mid\ell$.  As
$\sum_{\mathfrak l\mid\ell}f_{\mathfrak l}=[K:\Q]=\varphi(d)$
at such an unramified prime, we have
\[
 \sum_{\mathfrak l\mid\ell}f_{\mathfrak l}a_{\mathfrak l}
 =\begin{cases}
   \varphi(d),&d\nmid v_\ell(c),\\
   0,&d\mid v_\ell(c).
  \end{cases}
\]

Now suppose that $\ell\in S$, so $\ell\mid d$.
Put $t=v(d)=e_{\mathfrak l}v_\ell(d)$.
For any $u\in1+\mathfrak m^{2t+1}$, the polynomial $g(X)=X^d-u$
satisfies
\[
 v(g(1))=v(1-u)\ge2t+1>2t=2v(g'(1)).
\]
Hensel's lemma therefore gives a root of $g$ in $F$, proving
$1+\mathfrak m^{2t+1}\subseteq F^{\times d}$.
For every $x\in F^\times$ we have
$\theta_{\mathfrak l}(x^d)=\theta_{\mathfrak l}(x)^d=1$,
so $\theta_{\mathfrak l}$ is trivial on $1+\mathfrak m^{2t+1}$.
Consequently,
\[
 a_{\mathfrak l}\le2t+1=2e_{\mathfrak l}v_\ell(d)+1.
\]
Using $\sum_{\mathfrak l\mid\ell}e_{\mathfrak l}f_{\mathfrak l}
=\varphi(d)$ and $e_{\mathfrak l}\ge1$, we obtain
\[
 \sum_{\mathfrak l\mid\ell}f_{\mathfrak l}
       a_{\mathfrak l}
 \le 2v_\ell(d)\sum_{\mathfrak l\mid\ell}
             e_{\mathfrak l}f_{\mathfrak l}
       +\sum_{\mathfrak l\mid\ell}f_{\mathfrak l}
 \le \varphi(d)(2v_\ell(d)+1).
\]
Thus the contribution to the conductor norm from primes above
$S$ is at most
\[
 \prod_{\ell\mid d}\ell^{\varphi(d)(2v_\ell(d)+1)}
 =\bigl(d^2\rad(d)\bigr)^{\varphi(d)}.
\]
Multiplying this bound by the contributions from
$\ell\notin S$ proves \eqref{eq:conductor}.
\end{proof}

Let $T\supseteq S$ be a fixed finite set of rational primes, and put
$\pi_{d,T}(X)=\#\{p\le X:p\in\Pcal_d,\ p\notin T\}$.
Define the empirical probability measure on $\mu_d^T$ by
\begin{equation}\label{eq:empirical-local}
 \nu_{d,X,T}=
 \frac1{\varphi(d)\pi_{d,T}(X)}
 \sum_{\substack{p\le X,\ p\in\Pcal_d\\p\notin T}}
 \ \sum_{\mathfrak p\mid p}
 \delta_{\bigl((\ell/\mathfrak p)_d\bigr)_{\ell\in T}}.
\end{equation}
Here $\pi_{d,T}(X)=\pi_d(X)+O_T(1)$.

\begin{proposition}
\label{prop:local-law}
Let $u_d$ be the uniform probability measure on $\mu_d$.
There is a probability measure $\nu_{d,S}$ on
$\mu_d^S$ such that, for every fixed finite set of rational primes
$T\supseteq S$, the measures $\nu_{d,X,T}$ converge, as $X\to\infty$,
to $\nu_{d,S}\otimes u_d^{\otimes(T\setminus S)}$.
Explicitly, for every $\boldsymbol z\in\mu_d^T$,
\begin{equation}\label{eq:local-pointwise}
 \lim_{X\to\infty}\nu_{d,X,T}(\{\boldsymbol z\})
 =\nu_{d,S}(\{\boldsymbol z|_{S}\})\,d^{-|T\setminus S|}.
\end{equation}
\end{proposition}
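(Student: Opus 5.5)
We will compute the limit by Fourier analysis on the finite abelian group $\mu_d^T$, using the Chebotarev density theorem (in its Hecke $L$-function form) for the characters $\eta_{d,c}$. The characters of $\mu_d^T$ are $\boldsymbol z\mapsto\prod_{\ell\in T}z_\ell^{k_\ell}$ with $\boldsymbol k\in(\Z/d\Z)^T$. For such $\boldsymbol k$ put $c_{\boldsymbol k}=\prod_{\ell\in T}\ell^{k_\ell}$, which is well defined modulo $\Q^{\times d}$. For $p\in\Pcal_d$ with $p\notin T$ and $\mathfrak p\mid p$, multiplicativity of the residue symbol together with $\eta_{d,c}(\mathfrak p)=\chi_{\mathfrak p}(c)$ gives
\[
 \prod_{\ell\in T}\bigl((\ell/\mathfrak p)_d\bigr)^{k_\ell}=\eta_{d,c_{\boldsymbol k}}(\mathfrak p).
\]
Hence the Fourier coefficient of $\nu_{d,X,T}$ at $\boldsymbol k$ is
\[
 \widehat{\nu}_{d,X,T}(\boldsymbol k)=\frac1{\varphi(d)\pi_{d,T}(X)}\sum_{\substack{p\le X,\ p\in\Pcal_d\\ p\notin T}}\ \sum_{\mathfrak p\mid p}\eta_{d,c_{\boldsymbol k}}(\mathfrak p).
\]
The primes $\mathfrak p$ of $K$ of degree one above $p\in\Pcal_d$ are exactly the degree-one primes of $K$ whose norm lies in the class $d+1\bmod 2d$. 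We detect this condition with Dirichlet characters $\lambda$ modulo $2d$ composed with the norm. This turns $\widehat\nu_{d,X,T}(\boldsymbol k)$ into a combination of sums $\sum_{\Nm\mathfrak p\le X}\eta_{d,c_{\boldsymbol k}}(\mathfrak p)\lambda(\Nm\mathfrak p)$. Primes of higher degree contribute $O(\sqrt X)$ and can be ignored. By the prime ideal theorem for Hecke characters, each such sum is $o(X/\log X)$ unless the Hecke character $\eta_{d,c_{\boldsymbol k}}\cdot(\lambda\circ\Nm)$ is trivial. Therefore $\lim_X\widehat\nu_{d,X,T}(\boldsymbol k)$ exists and equals $0$ unless $\eta_{d,c_{\boldsymbol k}}$ coincides with some $\bar\lambda\circ\Nm$. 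In that case $\eta_{d,c_{\boldsymbol k}}$ is constant on the relevant $\mathfrak p$, so the limit is simply the constant value $\lambda(d+1)^{-1}$. Existence of the limit measure follows from Fourier inversion on the finite group $\mu_d^T$.

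The key structural step is to show that when $\eta_{d,c_{\boldsymbol k}}$ has the form $\bar\lambda\circ\Nm$, which means it is ramified only above $2d$, we must have $k_\ell=0$ for every $\ell\in T\setminus S$. This follows from \cref{lem:conductor}. For $\ell\notin S$ we have $v_\ell(c_{\boldsymbol k})=k_\ell$, and $d\nmid k_\ell$ unless $k_\ell\equiv 0$, so the character is ramified at $\ell$. On the other hand, $\lambda\circ\Nm$ has conductor supported on primes dividing $2d$, and these all lie in $S$ since $d$ is even. Consequently the limiting Fourier transform is supported on $\boldsymbol k$ with $\boldsymbol k|_{T\setminus S}=0$. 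On that subgroup its value depends only on $\boldsymbol k|_S$, and this dependence does not involve $T$. Inverting the Fourier transform then gives a limit of the product form $\nu_{d,S}\otimes u_d^{\otimes(T\setminus S)}$. Here $\nu_{d,S}$ is the measure on $\mu_d^S$ whose Fourier transform is this $T$-independent function. It is a probability measure because it arises as the limit of the case $T=S$, and \eqref{eq:local-pointwise} follows immediately.

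I expect the main obstacle to be the precise detection of $p\in\Pcal_d$ and the triviality analysis. Part of this is checking that $\eta_{d,c}\lambda\circ\Nm$ trivial really is the only non-decaying case; the Hecke form of Chebotarev handles this cleanly. The other part is making sure that $\pi_{d,T}(X)\sim\pi(X)/\varphi(2d)$ matches the count of degree-one primes up to the factor $\varphi(d)$, which follows since every $p\in\Pcal_d$ splits completely in $K$. We also need to confirm that excluding $p\in T$ changes only $O_T(1)$ terms.
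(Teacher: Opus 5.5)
Your proposal is correct and follows essentially the same route as the paper. The shared steps are: Fourier coefficients written as averages of $\eta_{d,c}(\mathfrak p)$; detection of $p\equiv d+1\pmod{2d}$ by Dirichlet characters modulo $2d$ composed with the norm; the prime ideal theorem for Hecke characters; and \cref{lem:conductor} together with the unramifiedness of $\lambda\circ\Nm$ outside $2d$, which kills every coefficient with a nonzero coordinate in $T\setminus S$. The only cosmetic difference is that you evaluate a surviving coefficient directly as $\overline{\lambda(d+1)}$, whereas the paper writes it as $\varphi(d)^{-1}\sum_{\eta\rho_\xi=1}\overline{\xi(d+1)}$, which is the same value.
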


\begin{proof}
Fix a finite set $T\supseteq S$.
For $\boldsymbol e\in\{0,\ldots,d-1\}^T$, put
$c(\boldsymbol e)=\prod_{\ell\in T}\ell^{e_\ell}$.
For any measure $\nu$ on $\mu_d^T$, use the Fourier convention
\[
 \widehat\nu(\boldsymbol e)
 =\sum_{\boldsymbol z\in\mu_d^T}
      \nu(\{\boldsymbol z\})\prod_{\ell\in T}z_\ell^{e_\ell}.
\]
By Fourier inversion, it suffices to compute the limits of the
Fourier coefficients of $\nu_{d,X,T}$.
If $p\in\Pcal_d$ and $p\notin T$, multiplicativity of the residue symbol gives
$\prod_{\ell\in T}(\ell/\mathfrak p)_d^{e_\ell}
=\eta_{d,c(\boldsymbol e)}(\mathfrak p)$ for every $\mathfrak p\mid p$.
Hence \eqref{eq:empirical-local} gives
\begin{equation}
 \widehat\nu_{d,X,T}(\boldsymbol e)
 =\frac1{\varphi(d)\pi_{d,T}(X)}
   \sum_{\substack{p\le X,\ p\in\Pcal_d\\p\notin T}}
   \sum_{\mathfrak p\mid p}\eta_{d,c(\boldsymbol e)}(\mathfrak p).
 \label{eq:local-Fourier}
\end{equation}
Fix $\boldsymbol e$ and abbreviate $\eta=\eta_{d,c(\boldsymbol e)}$.
For each Dirichlet character $\xi$ modulo $2d$, let $\rho_\xi$
denote the primitive Hecke character associated to
\[
 \mathfrak a\longmapsto\xi(\Nm\mathfrak a)
 \qquad ((\mathfrak a,2d)=1).
\]
Its conductor divides $(2d)\mathcal O_K$.  Indeed, for
$a\in\mathcal O_K$ with $a\equiv1\pmod{(2d)\mathcal O_K}$,
each Galois conjugate of $a$ has the same congruence, so
$N_{K/\Q}(a)\equiv1\pmod{2d}$.  Since $K$ is totally imaginary,
$N_{K/\Q}(a)>0$, and hence
$\xi(\Nm((a)))=\xi(N_{K/\Q}(a))=1$.
Such principal ideals generate the principal ray subgroup for
$(2d)\mathcal O_K$, so the norm character factors through that
ray class group.
Thus $\rho_\xi$ is unramified at every prime above $\ell\notin S$.
For every positive integer $n$, Dirichlet character orthogonality gives
\begin{equation}\label{eq:selector}
 \mathbf 1_{n\equiv d+1\ (2d)}
 =\frac1{\varphi(2d)}
   \sum_{\xi\,(\mathrm{mod}\,2d)}\overline{\xi(d+1)}\xi(n).
\end{equation}
Dirichlet characters are extended by zero on integers not coprime to $2d$.

For $p\nmid d$, the residue degree of a prime of $K$ above $p$ is
the multiplicative order of $p$ modulo $d$.  Thus a prime ideal
$\mathfrak p\mid p$ has residue degree one exactly when
$p\equiv1\pmod d$; in that case $p$ splits into $\varphi(d)$
prime ideals, each of norm $p$.
Since $p\equiv d+1\pmod{2d}$ implies $p\equiv1\pmod d$,
the numerator in \eqref{eq:local-Fourier} equals
\[
 \sum_{\substack{\Nm\mathfrak p\le X,\ f(\mathfrak p/p)=1\\
                  \mathfrak p\mid p,\ p\notin T}}
       \eta(\mathfrak p)\,
       \mathbf 1_{\Nm\mathfrak p\equiv d+1\ (2d)}.
\]
Insert \eqref{eq:selector} into this sum.  For $p\notin T$, both
$\eta$ and $\rho_\xi$ are unramified at $\mathfrak p\mid p$, and
$\xi(\Nm\mathfrak p)=\rho_\xi(\mathfrak p)$.
We may then sum over all prime ideals of norm at most $X$:
those of residue degree at least two lie above rational primes
$p\le X^{1/2}$, so there are $O_d(X^{1/2})$ of them;
the prime ideals above $T$ number $O_{d,T}(1)$.
In the resulting unrestricted sums, each product $\eta\rho_\xi$
is evaluated as its associated primitive Hecke character, extended
by zero at its conductor.  This convention can change the preceding
sum only at primes above $T$.
Consequently,
\[
 \widehat\nu_{d,X,T}(\boldsymbol e)=
 \frac1{\varphi(d)\varphi(2d)\pi_{d,T}(X)}
 \sum_{\xi\,(\mathrm{mod}\,2d)}\overline{\xi(d+1)}
 \sum_{\Nm\mathfrak p\le X}
        (\eta\rho_\xi)(\mathfrak p)
 +O_{d,T}\!\left(\frac{X^{1/2}+1}{\pi_{d,T}(X)}\right).
\]
The prime number theorem in arithmetic progressions, with the
finitely many primes in $T$ omitted, gives
\[
 \pi_{d,T}(X)\sim\frac{\Li(X)}{\varphi(2d)}.
\]
For each fixed finite-order Hecke character $\vartheta$ of $K$,
the prime-ideal theorem gives
\[
 \sum_{\Nm\mathfrak p\le X}\vartheta(\mathfrak p)
 =\begin{cases}
   \Li(X)+o(\Li(X)),&\vartheta=1,\\
   o(\Li(X)),&\vartheta\ne1.
  \end{cases}
\]
The characters $\eta\rho_\xi$ are fixed as $X\to\infty$, since
$d$, $T$, and $\boldsymbol e$ are fixed.  Applying this formula to
each of them yields
\begin{equation}\label{eq:local-Fourier-limit}
 \lim_{X\to\infty}\widehat\nu_{d,X,T}(\boldsymbol e)
 =\frac1{\varphi(d)}
 \sum_{\substack{\xi\,(\mathrm{mod}\,2d)\\
       \eta_{d,c(\boldsymbol e)}\rho_\xi=1}}
 \overline{\xi(d+1)}.
\end{equation}
Taking $T=S$ and applying Fourier inversion defines a probability
measure $\nu_{d,S}$ on $\mu_d^S$ by
\[
 \nu_{d,S}(\{\boldsymbol z\})
 :=\lim_{X\to\infty}\nu_{d,X,S}(\{\boldsymbol z\})
 \qquad(\boldsymbol z\in\mu_d^S).
\]

For any $c\in\Q^\times$, if $\ell\notin S$ and $d\nmid v_\ell(c)$,
then $\eta_{d,c}$ is ramified above $\ell$ by
\cref{lem:conductor}, whereas $\rho_\xi$ is unramified there.
On the inertia group at a prime above $\ell$, multiplication by
the unramified character $\rho_\xi$ does not change the nontrivial
restriction of $\eta_{d,c}$.  Hence $\eta_{d,c}\rho_\xi$ is ramified
there and cannot be the trivial character.
In particular,
\begin{equation}\label{eq:principal-support}
 \eta_{d,c}\rho_\xi\text{ is the trivial Hecke character}
 \quad\Longrightarrow\quad
 d\mid v_\ell(c)\quad(\ell\notin S).
\end{equation}
Now let $T\supseteq S$ be arbitrary.
If $e_\ell\ne0$ for some $\ell\in T\setminus S$, then
$v_\ell(c(\boldsymbol e))=e_\ell\in\{1,\ldots,d-1\}$.
By \eqref{eq:principal-support}, every $\eta_{d,c(\boldsymbol e)}\rho_\xi$
is nontrivial.  The sum in \eqref{eq:local-Fourier-limit} is then
empty, so the Fourier limit is zero.
Otherwise, $c(\boldsymbol e)$ is unchanged when the coordinates outside
$S$ are omitted.  Since the right-hand side of
\eqref{eq:local-Fourier-limit} depends only on $c(\boldsymbol e)$,
the limit is $\widehat\nu_{d,S}(\boldsymbol e|_{S})$.
Thus
\[
 \lim_{X\to\infty}\widehat\nu_{d,X,T}(\boldsymbol e)=
 \begin{cases}
 \widehat\nu_{d,S}(\boldsymbol e|_{S}),
      &e_\ell=0\text{ for all }\ell\in T\setminus S,\\
 0,   &\text{otherwise}.
 \end{cases}
\]
Applying the inversion formula and retaining only exponents supported
on $S$, we obtain
\[
 \begin{aligned}
 \lim_{X\to\infty}\nu_{d,X,T}(\{\boldsymbol z\})
 &=d^{-|T|}
   \sum_{\boldsymbol e\in\{0,\ldots,d-1\}^S}
     \widehat\nu_{d,S}(\boldsymbol e)
     \prod_{\ell\in S}z_\ell^{-e_\ell}\\
 &=d^{-|T\setminus S|}\nu_{d,S}(\{\boldsymbol z|_S\}).
 \end{aligned}
\]
This is \eqref{eq:local-pointwise}, and its right-hand side is the
point mass of $\nu_{d,S}\otimes u_d^{\otimes(T\setminus S)}$ at
$\boldsymbol z$.
\end{proof}

As $\mathfrak p$ runs over the primes above $p$, the characters
$\chi_{\mathfrak p}$ run over all characters modulo $p$ of exact
order $d$.  Indeed, the automorphism $\zeta_d\mapsto\zeta_d^v$
takes $\chi_{\mathfrak p}$ to $\chi_{\mathfrak p}^v$, and the
$\varphi(d)$ resulting characters are distinct.
Consequently, if
\[
 f((z_\ell^v)_{\ell\in T})=f((z_\ell)_{\ell\in T})
 \qquad(v\in U_d),
\]
its average over $\mathfrak p\mid p$ equals its value for any
one character of exact order $d$.

Let $(Z_{d,\ell})_{\ell\in S}$ have law $\nu_{d,S}$.
For $\ell\notin S$, let $Z_{d,\ell}$ be independent uniform
variables on $\mu_d$, independent also of the $S$-tuple.
For a rational prime $\ell$ and $z\in\mu_d$, put
\begin{equation}\label{eq:local-factor}
 F_{d,\ell}(z)=\prod_{u\in U_d}(1-z^u/\ell)^{-1},
 \qquad H_d(z)=\sum_{u\in U_d}z^u.
\end{equation}
Pairing $u$ with $-u$ shows that $F_{d,\ell}(z)>0$.
Its real logarithm satisfies
\begin{equation}\label{eq:local-log}
 \log F_{d,\ell}(z)
 =\sum_{j\ge1}\frac{H_d(z^j)}{j\ell^j}.
\end{equation}
This is an absolutely convergent real series.
Define, with primes ordered increasingly,
\begin{equation}\label{eq:Y-definition}
 Y_d=\prod_\ell F_{d,\ell}(Z_{d,\ell}).
\end{equation}

\begin{proposition}\label{prop:random-convergence}
The product in \eqref{eq:Y-definition} converges almost surely to a
random variable in $(0,\infty)$.
\end{proposition}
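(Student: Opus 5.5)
We work with the logarithm of the product. The finitely many factors with $\ell\in S$ are positive and finite, so they can be set aside, and it suffices to show that the series $\sum_{\ell\notin S}\log F_{d,\ell}(Z_{d,\ell})$ converges almost surely, with primes taken in increasing order. If the partial sums of this real series converge to a finite limit $L$, then the partial products converge to $\prod_{\ell\in S}F_{d,\ell}(Z_{d,\ell})\cdot e^{L}\in(0,\infty)$. Positivity of each factor was already noted after \eqref{eq:local-factor}.

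By \eqref{eq:local-log} we split
\[
 \log F_{d,\ell}(Z_{d,\ell})=\frac{H_d(Z_{d,\ell})}{\ell}+R_\ell,
 \qquad R_\ell=\sum_{j\ge2}\frac{H_d(Z_{d,\ell}^j)}{j\ell^j}.
\]
Since $|H_d|\le\varphi(d)$, we have $|R_\ell|\le\varphi(d)\sum_{j\ge2}\ell^{-j}\le 2\varphi(d)\ell^{-2}$ deterministically. Hence $\sum_\ell R_\ell$ converges absolutely for every outcome.

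The main term is $X_\ell=H_d(Z_{d,\ell})/\ell$ for $\ell\notin S$. These variables are independent, real, and bounded by $\varphi(d)/\ell$. For uniform $Z$ on $\mu_d$ we compute $\E[Z^u]=0$ for each $u\in U_d$, because $z\mapsto z^u$ permutes $\mu_d$ when $\gcd(u,d)=1$ and $\sum_{z\in\mu_d}z=0$. Therefore $\E X_\ell=0$. Also $\sum_\ell\E X_\ell^2\le\varphi(d)^2\sum_\ell\ell^{-2}<\infty$. Kolmogorov's two-series theorem (or the $L^2$ martingale convergence theorem applied to the partial sums) then gives almost sure convergence of $\sum_{\ell\notin S}X_\ell$.

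Combining the three pieces, the partial sums of the logarithm converge almost surely to a finite real limit, which proves the statement. There is no serious obstacle. The only points needing care are that the mean-zero property uses $\gcd(u,d)=1$, and that the $S$-coordinates, whose joint law $\nu_{d,S}$ is not uniform, enter only through finitely many finite positive factors and so do not affect convergence.
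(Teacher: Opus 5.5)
Your proof is correct and follows essentially the paper's route: you reduce to the logarithmic series over $\ell\notin S$ and apply Kolmogorov's convergence theorem to independent, bounded terms with summable variances. The only difference is cosmetic: you split off the $j\ge2$ terms of \eqref{eq:local-log} as a deterministic, absolutely convergent remainder and use only $\E H_d(Z_{d,\ell})=0$, whereas the paper keeps $G_\ell$ whole and computes its exact mean $-\frac{\varphi(d)}{d}\log(1-\ell^{-d})$.
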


\begin{proof}
For $\ell\notin S$, put $G_\ell=\log F_{d,\ell}(Z_{d,\ell})$.
These variables are independent, and uniformity on $\mu_d$ gives
\[
 \E H_d(Z_{d,\ell}^j)=\varphi(d)\mathbf1_{d\mid j}.
\]
Using \eqref{eq:local-log}, we obtain
\[
 \E G_\ell=-\frac{\varphi(d)}d\log(1-\ell^{-d})
           =O_d(\ell^{-d}),\qquad
 |G_\ell|\le\frac{\varphi(d)}{\ell-1}
           \le\frac{2\varphi(d)}\ell.
\]
Hence
\[
 \sum_{\ell\notin S}|\E G_\ell|<\infty,\qquad
 \sum_{\ell\notin S}\operatorname{Var}(G_\ell)<\infty.
\]
Kolmogorov's convergence theorem implies almost sure convergence
of $\sum_{\ell\notin S}(G_\ell-\E G_\ell)$.
Adding the sum of the means and the finitely many terms above $S$
proves that $\sum_\ell\log F_{d,\ell}(Z_{d,\ell})$ is finite almost
surely.  Exponentiation gives $0<Y_d<\infty$.
\end{proof}

\Needspace{7\baselineskip}
\begin{proposition}\label{prop:atomless}
The laws of $\log Y_d$ and $Y_d$ have no atoms.
\end{proposition}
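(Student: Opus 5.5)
Since $Y_d=\exp(\log Y_d)$ with $\exp$ a bijection $\R\to(0,\infty)$, it suffices to show that $\log Y_d$ has no atoms. Write
\[
 \log Y_d=W+\sum_{\ell\notin S}G_\ell,\qquad W=\sum_{\ell\in S}\log F_{d,\ell}(Z_{d,\ell}).
\]
The summands in $\sum_{\ell\notin S}G_\ell$ are independent of one another and of $W$. The plan is to use the elementary fact that if $X=U+V$ with $U,V$ independent and $U$ atomless, then $X$ is atomless, because
\[
 \Prob(X=x)=\E\,\Prob(U=x-V\mid V)=0 .
\]
So the task reduces to showing that $\sum_{\ell\notin S}G_\ell$, which converges a.s. by \cref{prop:random-convergence}, has an atomless law.

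For this I would invoke the Jessen--Wintner purity theorem (or L\'evy's theorem on infinite convolutions of discrete laws). If $\sum_\ell G_\ell$ converges a.s. and each $G_\ell$ is discrete, then the law of the sum is continuous as soon as
\[
 \sum_{\ell}\bigl(1-\max_x\Prob(G_\ell=x)\bigr)=\infty .
\]
To avoid citing the purity theorem, one can argue directly. For any $x$ and any finite set $L$ of primes outside $S$, condition on all $G_\ell$ with $\ell\notin L$. The atom mass of the full sum is then at most the largest atom of $\sum_{\ell\in L}G_\ell$, which is at most the largest atom of the sum of the $G_\ell$, $\ell\in L$, restricted to a subset where they are "free". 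It then suffices to produce, for each $M$, a finite set $L$ with $\max_x\Prob\bigl(\sum_{\ell\in L}G_\ell=x\bigr)\le 2^{-M}$.

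The key step is to show that each $G_\ell$, for $\ell\notin S$ large, takes at least two distinct values with probabilities bounded below. Since $Z_{d,\ell}$ is uniform on $\mu_d$, we have $\Prob(Z=1)=\Prob(Z=-1)=1/d$. The corresponding values are
\[
 \log F_{d,\ell}(1)=-\varphi(d)\log(1-1/\ell),\qquad \log F_{d,\ell}(-1)=-\varphi(d)\log(1+1/\ell).
\]
Here every $u\in U_d$ is odd because $d$ is even, so $(-1)^u=-1$. These two values are distinct, with opposite signs, and their gap is $\asymp 2\varphi(d)/\ell$. Hence $\max_x\Prob(G_\ell=x)\le1-1/d$, and the divergence condition holds trivially, since infinitely many terms are each at least $1/d$.

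To get an explicit bound without the purity theorem, take $M$ primes $\ell_1<\dots<\ell_M$ outside $S$ that grow very rapidly (lacunary), so that the gap $\delta_k$ between $G_{\ell_k}(1)$ and $G_{\ell_k}(-1)$ exceeds twice the total range of $\sum_{j>k}G_{\ell_j}$ over all outcomes. Each $|G_\ell|\le 2\varphi(d)/\ell$, so this is easy to arrange. Then, restricting to the event that each $Z_{\ell_k}\in\{\pm1\}$, the sign pattern is determined by the value, which gives the atom bound $\max_x\Prob\bigl(\sum_{\ell\in L}G_\ell=x\bigr)\le(1-1/d)^M$.

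More precisely, the bound comes from conditioning on the other $\mu_d$-values as well. For fixed values of all variables other than the choices $\pm1$, at most one of the $2^M$ sign patterns produces a given $x$, by the lacunary separation. This yields an atom bound of the form $\sum$ over patterns $\le \max(1-2/d+1/d)^M$-type decay, which tends to $0$.

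The main obstacle is making this separation argument clean, specifically the conditional bookkeeping when the other values of $Z$ are not $\pm1$. I would handle it by conditioning on the event $A_k=\{Z_{\ell_k}\in\{\pm1\}\}$ versus its complement, and noting that on $A_k$ the two choices have conditional probability $1/2$ each. This gives $\Prob(\text{sum}=x)\le\prod_k(1-\tfrac1d)=(1-\tfrac1d)^M$. The separation needs to hold only among outcomes with identical choices outside the $\pm1$ slots, which the lacunary choice ensures.
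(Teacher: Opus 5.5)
Your proof is correct, but it takes a different route from the paper. The reduction to $\sum_{\ell\notin S}G_\ell$ via independence from the $S$-tuple is fine; the paper simply puts the $S$-tuple into its conditioning $\sigma$-field.

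\textbf{The paper's route.} It keeps \emph{all} primes $q_1<\dots<q_n$ outside $S$ and writes $Z_{d,q_i}=\epsilon_iV_i$. It conditions on everything except the fair signs with $V_i=1$, and bounds the conditional atom by Sperner's antichain bound $2^{-K_n}\binom{K_n}{\lfloor K_n/2\rfloor}\ll(K_n+1)^{-1/2}$. This needs only that the gaps $\alpha_i$ are positive, not that they are separated. The price is an extra limiting step: the strong law ($K_n\to\infty$ a.s.) and dominated convergence.

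\textbf{Your first route.} Citing L\'evy's criterion for a.s.\ convergent series of independent discrete variables (the discrete case treated by Jessen--Wintner) is legitimate. Since $\max_x\Prob(G_\ell=x)\le1-1/d$ for every $\ell\notin S$, the divergence condition holds.

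\textbf{Your self-contained route.} Here you replace Sperner by lacunarity.
\begin{itemize}
\item Condition on the pair class of each $Z_{\ell_k}$, and on its value when it is not $\pm1$.
\item The map from the free signs to $\sum_{\ell\in L}G_\ell$ is then injective as soon as $\delta_k>\sum_{j>k}\delta_j$.
\item This follows from $\delta_k\ge2\varphi(d)/\ell_k$ and $|G_\ell|\le2\varphi(d)/\ell$ under a growth condition such as $\ell_{k+1}>8\ell_k$.
\item So the conditional atom is at most $2^{-|J|}$, and averaging gives exactly $\prod_{k\le M}\bigl(1-\tfrac2d+\tfrac1d\bigr)=(1-1/d)^M$.
\end{itemize}
This gives exponential decay using a fixed finite set of primes, with no strong law or dominated convergence. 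The cost is that you discard almost all primes and rely on explicit size estimates for the gaps; the paper's antichain argument works for arbitrary positive gaps.

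\textbf{Write-up.} Drop the vague middle paragraph (``restricted to a subset where they are free'', ``$\max(1-2/d+1/d)^M$-type decay''). Keep only the conditioning of your last paragraph, with the separation condition $\delta_k>\sum_{j>k}\delta_j$ stated explicitly.
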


\begin{proof}
List the primes outside $S$ as $q_1<q_2<\cdots$.
Choose one representative from each pair $\{z,-z\}\subset\mu_d$,
including $1$, and call the resulting set $R$.
Write
\[
 Z_{d,q_i}=\epsilon_iV_i,\qquad V_i\in R,\quad\epsilon_i\in\{-1,1\}.
\]
For each $i$, $V_i$ is uniform on $R$ and $\epsilon_i$ is an
independent fair sign.
These pairs are independent across $i$ and independent of the
$S$-tuple.  Put
\[
 I_n=\{i\le n:V_i=1\},\qquad K_n=|I_n|.
\]
Since $\Prob(V_i=1)=2/d$, the strong law gives $K_n/n\to2/d$
almost surely.

All $u\in U_d$ are odd, so
\[
 F_{d,q_i}(1)=(1-q_i^{-1})^{-\varphi(d)},\qquad
 F_{d,q_i}(-1)=(1+q_i^{-1})^{-\varphi(d)}.
\]
Thus
\[
 \alpha_i=\log F_{d,q_i}(1)-\log F_{d,q_i}(-1)
          =\varphi(d)\log\frac{1+q_i^{-1}}{1-q_i^{-1}}>0.
\]
For fixed $n$, condition on
\[
 \mathcal H_n=\sigma\!\left(
 (V_i,\epsilon_i\mathbf1_{V_i\ne1})_{i\le n},
 (Z_{d,q_i})_{i>n},(Z_{d,\ell})_{\ell\in S}
 \right).
\]
By \cref{prop:random-convergence}, the logarithmic series converges
almost surely.  Separating the unfixed signs gives
\[
 \log Y_d=B_n+\sum_{i\in I_n}\alpha_i\mathbf1_{\epsilon_i=1},
\]
where $B_n$ is finite and $\mathcal H_n$-measurable.
Conditionally on $\mathcal H_n$, the remaining signs are independent
and fair.

For any fixed real $x$, subsets of $I_n$ giving the sum $x-B_n$
form an antichain, since every $\alpha_i$ is positive.
An antichain of subsets of an $r$-element set has size at most
$\binom r{\lfloor r/2\rfloor}$.
Consequently,
\[
 \Prob(\log Y_d=x\mid\mathcal H_n)
 \le2^{-K_n}\binom{K_n}{\lfloor K_n/2\rfloor}
 \ll(K_n+1)^{-1/2}.
\]
The conditional probability is bounded by $1$ and tends to zero
almost surely because $K_n\to\infty$.
Taking expectations and using dominated convergence yields
$\Prob(\log Y_d=x)=0$.
\end{proof}

\section{The limiting law and class groups}\label{sec:weak}

Fix an even integer $d\ge4$, and put
\[
 n=\varphi(d),\qquad m=n/2,\qquad K=\Q(\zeta_d),\qquad
 S=\{\ell\text{ prime}:\ell\mid d\}.
\]
In this section we abbreviate
\[
 F_\ell=F_{d,\ell},\quad H=H_d,\quad Z_\ell=Z_{d,\ell},\quad Y=Y_d.
\]
For $p\in\Pcal_d$, choose a character $\psi_p$ modulo $p$ of exact
order $d$, and set
\[
 W_p=\prod_{a\in U_d}L(1,\psi_p^a)>0.
\]
The product does not depend on the choice of $\psi_p$: replacing
$\psi_p$ by $\psi_p^v$, $v\in U_d$, permutes its factors.
For functions of $p\in\Pcal_d$ and sets $E\subseteq\Pcal_d$, write
\[
 \E_X f=\frac1{\pi_d(X)}\sum_{\substack{p\le X\\p\in\Pcal_d}}f(p),
 \qquad \Prob_X(E)=\E_X\mathbf1_E.
\]

\begin{theorem}\label{thm:weak-law}
For every bounded continuous $f:\R\to\R$,
\begin{equation}\label{eq:weak-law}
 \lim_{X\to\infty}\E_X f(W_p)=\E f(Y).
\end{equation}
\end{theorem}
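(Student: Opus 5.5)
The plan is the standard Granville--Soundararajan truncation scheme. The truncated Euler product is controlled by \cref{prop:local-law}, and the tail is shown to be negligible on average over $p\in\Pcal_d$.

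\emph{Step 1: truncation.} Fix a parameter $y$ (eventually $y\to\infty$ slowly) and put $T_y=S\cup\{\ell\le y\}$. Define
\[
 W_p(y)=\prod_{\ell\le y}\prod_{a\in U_d}\bigl(1-\psi_p^a(\ell)/\ell\bigr)^{-1},
 \qquad
 Y(y)=\prod_{\ell\le y}F_\ell(Z_\ell).
\]
For $p\notin T_y$ and any $\mathfrak p\mid p$, taking $\psi_p=\chi_{\mathfrak p}$ gives $W_p(y)=\prod_{\ell\le y}F_\ell\bigl((\ell/\mathfrak p)_d\bigr)$. This quantity is invariant under $z\mapsto z^v$ for $v\in U_d$. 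By the remark following \cref{prop:local-law}, $\E_X f(W_p(y))$ therefore equals, up to $O_y(1/\pi_d(X))$, the integral of $f\bigl(\prod_{\ell\le y}F_\ell(z_\ell)\bigr)$ against $\nu_{d,X,T_y}$. \Cref{prop:local-law} converts this into $\E f(Y(y))$ as $X\to\infty$, since the function is defined on the finite set $\mu_d^{T_y}$ and no continuity issue arises. By \cref{prop:random-convergence}, $Y(y)\to Y$ almost surely, so $\E f(Y(y))\to\E f(Y)$ by bounded convergence.

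\emph{Step 2: the tail, which is the main obstacle.} It suffices to show, for each $\eps>0$,
\[
 \lim_{y\to\infty}\limsup_{X\to\infty}\Prob_X\bigl(|\log W_p-\log W_p(y)|>\eps\bigr)=0 .
\]
Indeed, by a routine $\eps/3$ argument this gives \eqref{eq:weak-law}, first for bounded uniformly continuous $f$ and then for all bounded continuous $f$ by tightness, which follows from Step 1. Write $\log W_p-\log W_p(y)=\sum_{a\in U_d}\sum_{\ell>y}\sum_{j}\psi_p^{aj}(\ell)/(j\ell^j)$. The terms with $j\ge2$ contribute $O(1/y)$ uniformly in $p$. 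For $j=1$, I would split at $Z=X^{\delta}$.

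\emph{The range $y<\ell\le Z$.} I would bound the second moment $\E_X\bigl|\sum_{y<\ell\le Z}H(\psi_p(\ell))/\ell\bigr|^2$. Expanding the square and averaging over $\mathfrak p\mid p$ produces sums of $\eta_{d,\ell_1^{u}\ell_2^{-v}}(\mathfrak p)$ over primes. For $\ell_1\ne\ell_2$ the character is ramified by \cref{lem:conductor}, so it is nontrivial even after twisting by $\rho_\xi$. Here a uniform prime ideal theorem is needed, with conductors up to $Z^{2n}$. The cleanest route is large-sieve-type bounds for $d$-th power residue symbols, or the unconditional Siegel--Walfisz range for Hecke characters, taking $Z=(\log X)^A$ instead. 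The diagonal $\ell_1=\ell_2$ gives $\sum_{\ell>y}\varphi(d)^2/\ell^2\ll 1/y$.

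\emph{The range $\ell>Z$ and the full tail of $L(1,\psi_p^a)$.} I would use the standard short Euler product approximation: for all but $O(X^{1-c})$ characters of conductor $\le X$, those having no Landau--Siegel type zero-density exceptions (zero-density estimates for Dirichlet $L$-functions, as in Granville--Soundararajan), one has $\log L(1,\chi)=\sum_{\ell\le(\log X)^A}\log(1-\chi(\ell)/\ell)^{-1}+o(1)$. Since $\pi_d(X)\gg X/\log X$, the exceptional primes have relative density zero. This handles the intermediate range up to $(\log X)^A$ as well, and the second-moment computation is needed only for $y<\ell\le(\log X)^A$, where the conductors of the $\eta$ characters are $\ll(\log X)^{2An}$ and Siegel--Walfisz for Hecke characters of $K$ applies. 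I expect this step, namely matching the zero-density input with the uniformity needed in the local-law averaging, to be the technical heart of the argument.
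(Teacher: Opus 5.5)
Your proposal is correct and, once you settle on the $(\log X)^A$ cutoff rather than $X^{\delta}$, it is essentially the paper's argument. The ingredients match: the Granville--Soundararajan Euler-product approximation outside a sparse exceptional set of primes, a second-moment bound on the middle range $(y,(\log X)^A]$ via the Hecke Siegel--Walfisz estimate (with the $\rho_\xi$ twists and conductors $\ll(\log X)^{O(\varphi(d))}$), and \cref{prop:local-law} at a fixed cutoff, all combined by the standard truncation argument; your trivial diagonal bound $\varphi(d)^2/\ell^2$ suffices in place of the paper's exact evaluation.
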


Put $y=(\log X)^{64}$.
Granville--Soundararajan \cite[Proposition~2.2]{GS}, with $Q=X$
and $A=4$, gives
\begin{equation}\label{eq:GS-approximation}
 L(1,\chi)=\prod_{\ell\le y}(1-\chi(\ell)/\ell)^{-1}
          \bigl(1+O((\log X)^{-3})\bigr)
\end{equation}
for all but at most $X^{1/2}$ primitive characters of conductor at
most $X$.  This statement includes complex characters.
The error follows from
$A^2\log X/y^{1/(4A)}=16(\log X)^{-3}$; the conditions
$A\le\log\log X$ and $y\le X/2$ hold for sufficiently large $X$.

Let $\mathcal E_X$ consist of the primes $p\le X$ in $\Pcal_d$
for which \eqref{eq:GS-approximation} fails for some $\psi_p^a$,
$a\in U_d$.  Characters of different prime conductors are distinct,
so
\[
 |\mathcal E_X|\le X^{1/2},\qquad
 \Prob_X(\mathcal E_X)=o(1).
\]
Extend $F_\ell$ and $H$ to zero by $F_\ell(0)=1$ and $H(0)=0$.
Thus the Euler factor at $\ell=p$ is included without exception.
Multiplying \eqref{eq:GS-approximation} over $a\in U_d$ and pairing
conjugates gives, uniformly for $p\notin\mathcal E_X$,
\begin{equation}\label{eq:log-approximation}
 \log W_p=\sum_{\ell\le y}\log F_\ell(\psi_p(\ell))
                 +O_d((\log X)^{-3}).
\end{equation}

We also use the following form of the Hecke Siegel--Walfisz theorem:
for fixed $B,D>0$,
\begin{equation}\label{eq:HSW}
 \sum_{\Nm\mathfrak p\le X}\vartheta(\mathfrak p)
 \ll_{d,B,D}\frac{X}{(\log X)^D}
 \quad\left(
 \vartheta\ne1,\ \Nm\mathfrak f(\vartheta)\le(\log X)^B
 \right).
\end{equation}
Here $\vartheta$ is a finite-order Hecke character of $K$, taken
primitive and extended by zero at its conductor.
The estimate is uniform in $\vartheta$; see \cite{Goldstein} and
\cite[Lemma~33]{BabuEtAl}.  In the latter formulation, taking
$\epsilon=1/(4B)$ gives an exponential saving in
$(\log X)^{1/4}$, which implies \eqref{eq:HSW}.
The implied constant may be ineffective; real characters with a
possible exceptional zero are included.

\begin{proposition}\label{prop:empirical-tail}
For every fixed $R>\max S$,
\begin{equation}\label{eq:tail-L2}
 \lim_{X\to\infty}\E_X
 \left|\sum_{R<\ell\le y}\frac{H(\psi_p(\ell))}{\ell}\right|^2
 =n\sum_{\ell>R}\ell^{-2}.
\end{equation}
Consequently, for every $\eps>0$,
\begin{equation}\label{eq:log-tail-probability}
 \lim_{R\to\infty}\limsup_{X\to\infty}
 \Prob_X\!\left(
 \left|\sum_{R<\ell\le y}\log F_\ell(\psi_p(\ell))\right|>\eps
 \right)=0.
\end{equation}
\end{proposition}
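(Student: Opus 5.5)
The plan is to expand the square, exchange summation, and evaluate each pair correlation with the Hecke Siegel--Walfisz estimate \eqref{eq:HSW}. Since $H(\bar z)=\overline{H(z)}=H(z)$ ($U_d$ is closed under negation), $H$ is real-valued. Expanding gives
\[
 \E_X\Bigl|\sum_{R<\ell\le y}\frac{H(\psi_p(\ell))}{\ell}\Bigr|^2
 =\sum_{R<\ell_1,\ell_2\le y}\frac{1}{\ell_1\ell_2}
 \sum_{u,v\in U_d}\E_X\bigl[\psi_p(\ell_1)^u\psi_p(\ell_2)^v\bigr].
\]
The summand is invariant under $\psi_p\mapsto\psi_p^w$, so I may average over $\mathfrak p\mid p$ as in the remark after \cref{prop:local-law}. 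Then $\psi_p(\ell_1)^u\psi_p(\ell_2)^v$ becomes $\eta_{d,c}(\mathfrak p)$ with $c=\ell_1^u\ell_2^v$, valid for $p\ne\ell_1,\ell_2$; the primes $p=\ell_i\le y$ contribute $O(y/\pi_d(X))$, which is negligible.

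Next I insert the selector \eqref{eq:selector}, exactly as in the proof of \cref{prop:local-law}. Each inner average becomes
\[
 \frac{1}{\varphi(d)\varphi(2d)\pi_d(X)}\sum_\xi\overline{\xi(d+1)}\sum_{\Nm\mathfrak p\le X}(\eta_{d,c}\rho_\xi)(\mathfrak p)+O\!\left(\frac{X^{1/2}}{\pi_d(X)}\right).
\]
I then split into two cases.

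\textbf{Diagonal case: $\ell_1=\ell_2$ and $u+v\equiv0\pmod d$.} Here $c$ is a $d$-th power, so the character is trivial and the average is $1$. For each $u$ there is exactly one such $v$, giving $n$ pairs. Hence the diagonal contributes $n\sum_{R<\ell\le y}\ell^{-2}\to n\sum_{\ell>R}\ell^{-2}$.

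\textbf{Off-diagonal case.} Some $\ell_i>R>\max S$ has $v_{\ell_i}(c)\not\equiv0\pmod d$. By \eqref{eq:principal-support}, $\eta_{d,c}\rho_\xi\ne1$. By \cref{lem:conductor}, its conductor norm is at most $O_d\bigl((\ell_1\ell_2)^{n}\bigr)\le O_d\bigl(y^{2n}\bigr)=(\log X)^{O_d(1)}$; the factor $\rho_\xi$ only adds primes above $S$. So \eqref{eq:HSW} applies with a fixed $B$ and with $D$ large, giving $O((\log X)^{-D+1})$ per term after dividing by $\pi_d(X)$. Summing over all $(\ell_1,\ell_2,u,v)$ costs at most $n^2(\log\log y+O(1))^2$, so the total off-diagonal contribution is $o(1)$. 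Uniformity of \eqref{eq:HSW} in $\vartheta$ is the key input here. This proves \eqref{eq:tail-L2}.

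For \eqref{eq:log-tail-probability}, I use \eqref{eq:local-log}: $\log F_\ell(z)=H(z)/\ell+E_\ell$ with $|E_\ell|\le n\sum_{j\ge2}\ell^{-j}\ll n\ell^{-2}$. For $p\ne\ell$ this holds with $z=\psi_p(\ell)$, and $p\le y$ is a negligible set of $p$. Therefore the tail sum differs from $\sum_{R<\ell\le y}H(\psi_p(\ell))/\ell$ by at most $O(n/R)$ deterministically. Once $R$ is large enough that this is below $\eps/2$, Chebyshev's inequality with \eqref{eq:tail-L2} bounds the $\limsup$ by $(4/\eps^2)\,n\sum_{\ell>R}\ell^{-2}$, which tends to $0$ as $R\to\infty$.

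The main obstacle is the off-diagonal step. It needs the conductor bound to stay polylogarithmic, which is why $y=(\log X)^{64}$ is only a power of $\log X$. It also needs the Siegel--Walfisz saving to beat the harmless $(\log\log X)^2$ count of terms; that count costs only $\log\log$ factors because of the $1/(\ell_1\ell_2)$ weights.
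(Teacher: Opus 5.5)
Your proposal is correct and takes essentially the same route as the paper. You expand the square, average over $\mathfrak p\mid p$ to turn each term into $\eta_{d,c}(\mathfrak p)$, keep the $n$ diagonal terms where $c$ is a $d$-th power, and bound the off-diagonal terms uniformly using \eqref{eq:principal-support}, the polylogarithmic conductor bound and \eqref{eq:HSW}, at a total weight cost of only $\bigl(\sum_{\ell\le y}\ell^{-1}\bigr)^2$. Your deduction of \eqref{eq:log-tail-probability} via the $n/\ell^2$ comparison and Chebyshev's inequality is also the paper's argument; the only cosmetic difference is that the paper discards all $p\le y$ at the outset rather than just $p=\ell_1,\ell_2$.
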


\begin{proof}
Write $M(y)=\sum_{\ell\le y}\ell^{-1}\le\log y$.
Since $|H|\le n$, the contribution of $p\le y$ to the left side
of \eqref{eq:tail-L2} is
\[
 O_d\!\left(\frac{yM(y)^2}{\pi_d(X)}\right)=o(1).
\]
We may therefore assume $p>y$.

For $\ell,q\le y$, expanding $H$ gives
\begin{equation}\label{eq:tail-orientation}
 \begin{aligned}
 H(\psi_p(\ell))\overline{H(\psi_p(q))}
 &=\sum_{a,b\in U_d}\psi_p(\ell)^a\psi_p(q)^{-b}\\
 &=\frac1n\sum_{\mathfrak p\mid p}
       \sum_{a,b\in U_d}\eta_{d,\ell^a q^{-b}}(\mathfrak p).
 \end{aligned}
\end{equation}
The second equality holds for the full sum over $(a,b)$:
each character $\chi_{\mathfrak p}$ is a faithful power of $\psi_p$,
and taking such a power permutes $U_d^2$.

For $c\in\Q^\times$ define
\[
 C_X(c)=\frac1{n\pi_d(X)}
       \sum_{\substack{y<p\le X\\p\in\Pcal_d}}
       \sum_{\mathfrak p\mid p}\eta_{d,c}(\mathfrak p).
\]
We estimate $C_X(\ell^a q^{-b})$ uniformly for
$R<\ell,q\le y$ and $a,b\in U_d$.
If $\ell=q$ and $a=b$, then $c=1$ and
\[
 C_X(1)=1+O_d\!\left(\frac y{\pi_d(X)}\right).
\]
In every other case, $v_r(c)\not\equiv0\pmod d$ for at least
one $r\in\{\ell,q\}$: for $\ell\ne q$ the valuations are $a$ and
$-b$, while for $\ell=q$ the valuation is $a-b\ne0\pmod d$.
Thus \eqref{eq:principal-support} implies
\[
 \eta_{d,c}\rho_\xi\ne1
 \qquad\text{for every character }\xi\pmod{2d}.
\]

Put $C_d=(d^2\rad(d))^n$.
For $\mathfrak l\mid d$,
\[
 a_{\mathfrak l}(\rho_\xi)\le v_{\mathfrak l}(2d)
 \le2v_{\mathfrak l}(d).
\]
The conductor exponent of a product is at most the maximum of
the two exponents.  Hence the local bounds in
\cref{lem:conductor} give
\begin{equation}\label{eq:twisted-conductor}
 \Nm\mathfrak f(\eta_{d,c}\rho_\xi)
 \le C_d
 \prod_{\substack{r\notin S\\d\nmid v_r(c)}}r^n.
\end{equation}
For $c=\ell^a q^{-b}$ this is at most
$C_d(\ell q)^n\le C_d(\log X)^{128n}$.
Choose $B=128n+1$.  For sufficiently large $X$, all these conductors
are at most $(\log X)^B$.

Inserting \eqref{eq:selector} and passing to all prime ideals,
as in the proof of \cref{prop:local-law}, gives
\[
 C_X(c)=\frac1{n\varphi(2d)\pi_d(X)}
   \sum_{\xi\,(\mathrm{mod}\,2d)}\overline{\xi(d+1)}
   \sum_{\Nm\mathfrak p\le X}(\eta_{d,c}\rho_\xi)(\mathfrak p)
   +O_d\!\left(\frac{X^{1/2}+y}{\pi_d(X)}\right).
\]
The error counts prime ideals of residue degree at least two and
those above $p\le y$.  The conductors are supported above
$S\cup\{\ell,q\}$, so these omissions also account for every
prime where the primitive product differs from the original
character product.
Apply \eqref{eq:HSW} with $D=6$ and use
$\pi_d(X)\asymp_d X/\log X$.  Uniformly over the nondiagonal terms,
\[
 |C_X(c)|\ll_d\Delta_X,\qquad
 \Delta_X=(\log X)^{-5}+\frac{X^{1/2}+y}{\pi_d(X)}.
\]
After \eqref{eq:tail-orientation}, the sum of the absolute weights
of these terms is at most $n^2M(y)^2$.  Therefore
\[
 \begin{aligned}
 \E_X\left|\sum_{R<\ell\le y}
                  \frac{H(\psi_p(\ell))}{\ell}\right|^2
 &=n\sum_{R<\ell\le y}\ell^{-2}
       +O_d\!\left(M(y)^2\Delta_X\right)\\
 &=n\sum_{R<\ell\le y}\ell^{-2}+o(1).
 \end{aligned}
\]
Since $y\to\infty$, this proves \eqref{eq:tail-L2}.

For $z\in\mu_d\cup\{0\}$, \eqref{eq:local-log} gives
\[
 \left|\log F_\ell(z)-\frac{H(z)}\ell\right|
 \le n\sum_{j\ge2}\frac1{j\ell^j}
 \le\frac{n}{2\ell(\ell-1)}
 \le\frac n{\ell^2}.
\]
If $R$ is large enough that $n\sum_{\ell>R}\ell^{-2}\le\eps/2$,
Chebyshev's inequality yields
\[
 \limsup_{X\to\infty}
 \Prob_X\!\left(
 \left|\sum_{R<\ell\le y}\log F_\ell(\psi_p(\ell))\right|>\eps
 \right)
 \le\frac{4n}{\eps^2}\sum_{\ell>R}\ell^{-2}.
\]
Letting $R\to\infty$ proves \eqref{eq:log-tail-probability}.
\end{proof}

\begin{proof}[Proof of \cref{thm:weak-law}]
Put
\[
 \begin{aligned}
 L_p&=\log W_p,& L_p^{(R)}
   &=\sum_{\ell\le R}\log F_\ell(\psi_p(\ell)),\\
 L&=\log Y,& L^{(R)}
   &=\sum_{\ell\le R}\log F_\ell(Z_\ell).
 \end{aligned}
\]
For fixed $R>\max S$, take $T=\{\ell:\ell\le R\}$ in
\cref{prop:local-law}.  Since
\[
 F_\ell(z^v)=F_\ell(z)\qquad(v\in U_d),
\]
the sum $L_p^{(R)}$ has the same value for every choice of the
prime ideal above $p$.  Omitting the finitely many $p\le R$,
the local law therefore gives
\[
 \lim_{X\to\infty}\E_X f(L_p^{(R)})=\E f(L^{(R)})
\]
for every bounded continuous $f$.
Also, $L^{(R)}\to L$ almost surely by \cref{prop:random-convergence}.

By \eqref{eq:log-approximation} and \eqref{eq:log-tail-probability},
including the exceptional set $\mathcal E_X$ of empirical mass $o(1)$,
\begin{equation}\label{eq:truncation-in-probability}
 \lim_{R\to\infty}\limsup_{X\to\infty}
 \Prob_X(|L_p-L_p^{(R)}|>\eps)=0
 \qquad(\eps>0).
\end{equation}
For a bounded $1$-Lipschitz function $f$,
\[
 \begin{aligned}
 |\E_X f(L_p)-\E f(L)|
 &\le\eps+2\|f\|_\infty
       \Prob_X(|L_p-L_p^{(R)}|>\eps)\\
 &\quad+|\E_X f(L_p^{(R)})-\E f(L^{(R)})|
       +|\E f(L^{(R)})-\E f(L)|.
 \end{aligned}
\]
Let $X\to\infty$, then $R\to\infty$, and finally $\eps\downarrow0$.
The second line tends to zero by the fixed-cutoff limit and bounded
convergence.  Equation \eqref{eq:truncation-in-probability} handles
the first line.  Thus $L_p$ converges in empirical distribution to $L$.
Bounded Lipschitz functions determine weak convergence, and applying
the continuous map $x\mapsto e^x$ proves \eqref{eq:weak-law}.
\end{proof}

We now restrict to $p\in\Pcal_d$ with $p>d+1$, so that $N=(p-1)/d>1$.
For a nontrivial character $\chi$ modulo $p$, put
\[
 B_{1,\chi}=\frac1p\sum_{b=1}^{p-1}b\chi(b),\qquad
 G_p=\Gcal_{d,p}
   =\Nm_{K/\Q}(B_{1,\psi_p^{-1}})
   =\prod_{a\in U_d}B_{1,\psi_p^{-a}}.
\]

\begin{lemma}\label{lem:Bernoulli-norm}
For $p\in\Pcal_d$, $p>d+1$, one has $G_p\in\Z_{>0}$ and
\begin{equation}\label{prop2.1}
 G_p=\frac{p^m}{\pi^n}W_p.
\end{equation}
\end{lemma}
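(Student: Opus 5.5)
The identity \eqref{prop2.1} comes from the standard value of $L(1,\chi)$ for odd primitive characters, and integrality from an analysis of the denominators of $B_{1,\chi}$. Each $\psi_p^a$ ($a\in U_d$) is odd, since $a$ is odd and $\psi_p(-1)=-1$, and primitive of conductor $p$. For an odd primitive character $\chi$ modulo $f$ we have
\[
 L(1,\chi)=\pi i\,\frac{\tau(\chi)}{f}\,B_{1,\overline\chi},
\]
which follows from the finite Fourier expansion of $\chi$ together with $\sum_{b}\chi(b)\,b/f$. Multiplying over $a\in U_d$ gives $W_p=\prod_a \pi i\,\tau(\psi_p^a)p^{-1}B_{1,\psi_p^{-a}}$. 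We pair $a$ with $-a$ and use $\tau(\chi)\tau(\overline\chi)=\chi(-1)p=-p$. Each pair then contributes $(\pi i)^2(-p)/p^2=\pi^2/p$. There are $m$ pairs, so
\[
 W_p=\frac{\pi^n}{p^m}\prod_{a\in U_d}B_{1,\psi_p^{-a}}=\frac{\pi^n}{p^m}G_p,
\]
which is \eqref{prop2.1}. Positivity follows because $W_p>0$, as noted earlier. Alternatively, $G_p=\prod_{\text{pairs}}|B_{1,\psi_p^{-a}}|^2$, and each factor is nonzero since $L(1,\chi)\ne0$.

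$G_p$ is a norm from $K=\Q(\zeta_d)$, so it is rational; the content is integrality. For $\chi=\psi_p^{-a}$ I would use the standard trick. Choose $c$ coprime to $dp$, for instance any prime $c$ not dividing $dp$ with $\chi(c)\ne1$; alternatively run the argument for several $c$. Comparing $\sum b\chi(b)$ with the sum over $cb$ reduced modulo $p$ gives
\[
 (1-c\,\chi(c))B_{1,\chi}=\sum_{b}\chi(b)\Bigl(\frac{cb}{p}-\Bigl\lfloor\frac{cb}{p}\Bigr\rfloor\Bigr)c\cdots,
\]
that is, $(\chi(c)c-1)B_{1,\chi}\in\Z[\zeta_d]$. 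So the only possible denominators of $B_{1,\chi}$ divide $\gcd_c(1-c\chi(c))$. Taking norms shows that $G_p\cdot\Nm(1-c\chi(c))\in\Z$ for all admissible $c$.

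Two sources of denominators must then be excluded. First, a prime $\mathfrak l$ above $p$: $B_{1,\chi}=\frac1p\sum b\chi(b)$ is $p$-integral because $\sum b\chi(b)\equiv\sum b^{1+aN'}\equiv0\pmod{\mathfrak p}$ for a suitable Teichmüller-type congruence, as the exponent is not $\equiv -1 \pmod{p-1}$. This is where $N>1$ (that is, $p>d+1$) is used. Second, primes dividing $d$: here the standard result (Washington, Thm.~4.? / Carlitz) says that $B_{1,\chi}$ is integral unless the conductor is a prime power $\ell^k$ and $\chi$ has order a power of $\ell$ on the relevant part. Our conductor is $p$ and the order is $d\ge4$, which is not a power of $p$, so $B_{1,\chi}\in\Z[\zeta_d]$ directly. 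Concretely, choosing $c$ ranging over primes with $\chi(c)$ a primitive $d$-th root of unity makes $\gcd$ of the norms of $1-c\chi(c)$ coprime to every prime $\ell$.

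The main obstacle is the $p$-adic integrality at primes above $p$, namely checking the exponent condition so that the Teichmüller congruence gives $\sum b\,\omega^{k}(b)\equiv0$. This is exactly where the hypothesis $p>d+1$ enters. The $\ell\mid d$ denominators are dispatched by the cited integrality criterion.
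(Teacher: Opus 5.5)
Your derivation of $G_p=p^mW_p/\pi^n$ and of $G_p>0$ is correct and is the paper's argument. The paper quotes the functional equation in the form $B_{1,\chi}B_{1,\bar\chi}=\frac{p}{\pi^2}L(1,\chi)L(1,\bar\chi)$, which is exactly what your formula $L(1,\chi)=\pi i\,\tau(\chi)B_{1,\bar\chi}/p$ together with $\tau(\chi)\tau(\bar\chi)=-p$ gives.

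The integrality part, however, contains a step that fails: your treatment of the ``primes dividing $d$''.
\begin{itemize}
\item The criterion you attribute to Washington/Carlitz is misquoted. As you apply it, only the order of $\chi$ matters. It would then make $B_{1,\chi}$ integral for every nontrivial $\chi$ modulo $p$, including $\omega^{-1}$, for which $pB_{1,\chi}\equiv-1$ modulo a prime above $p$.
\item The ``concrete'' gcd claim is false when $d$ is a power of $2$. In that case $\Nm_{K/\Q}(1-c\chi(c))=\Phi_d(c)=c^{d/2}+1$, which is even for every $c$ prime to $dp$. For $d=4$, for example, $(1+i)$ divides $1\mp ci$ for every odd $c$. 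So no admissible $c$ removes the prime above $2$, and this affects exactly the cases $d=4,8$ of the corollary.
\end{itemize}

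This step is also unnecessary. Since $pB_{1,\chi}=\sum_b b\chi(b)\in\Z[\zeta_d]$ and $p\nmid d$, the number $B_{1,\chi}$ is integral at every prime not above $p$. Primes dividing $d$ never arise, and only primes above $p$ need attention.

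At those primes your Teichm\"uller argument does work, once you make two points explicit.
\begin{itemize}
\item Run it at every $\mathfrak p\mid p$. Write $\chi=\chi_{\mathfrak p}^j$ with $j\in U_d$, so that $\chi(b)\equiv b^{jN}\pmod{\mathfrak p}$. Then $0<1+jN\le p-N<p-1$ because $N>1$, so $\sum_b b^{1+jN}\equiv 0\pmod p$.
\item Use that $p$ splits completely in $K$. Then $v_{\mathfrak p}(p)=1$, so $\mathfrak p$-divisibility of $\sum_b b\chi(b)$ for all $\mathfrak p\mid p$ gives divisibility by $p$.
\end{itemize}

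The paper's route is shorter. It uses your $c$-trick with the opposite choice: $c=a\in\ker\chi$ with $a\ne1$, which exists because $|\ker\chi|=N>1$. Then $(a-1)B_{1,\chi}\in\Z[\zeta_d]$ with $a-1$ a rational integer prime to $p$, and B\'ezout against $pB_{1,\chi}\in\Z[\zeta_d]$ finishes at once. Insisting on $\chi(c)\ne1$ is what pushed you into the ideal-theoretic bookkeeping that went wrong.
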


\begin{proof}
Let $\chi$ have exact order $d$.
Its kernel has order $N>1$, so choose $2\le a\le p-1$
with $\chi(a)=1$.  Permuting the nonzero residues by multiplication
by $a$ gives
\[
 (a-1)B_{1,\chi}
 =\sum_{b=1}^{p-1}\left\lfloor\frac{ab}p\right\rfloor\chi(b)
 \in\Z[\zeta_d].
\]
Also $pB_{1,\chi}\in\Z[\zeta_d]$.  Since $\gcd(a-1,p)=1$,
B\'ezout's identity implies $B_{1,\chi}\in\Z[\zeta_d]$.
Its norm is therefore an integer.

All characters $\psi_p^a$ are odd and primitive.  Their functional
equations \cite[Theorem~4.2]{Washington} imply
\[
 B_{1,\chi}B_{1,\bar\chi}
   =\frac p{\pi^2}L(1,\chi)L(1,\bar\chi)>0.
\]
Multiplication over the $m$ conjugate pairs proves both
$G_p>0$ and \eqref{prop2.1}.
\end{proof}

Define
\begin{equation}\label{eq:blocks}
 B_p^-=\bigoplus_{a\in U_d}A_{aN},\qquad
 B_p^+=\bigoplus_{a\in U_d}A_{p-aN}.
\end{equation}

\begin{proposition}\label{prop:valuation}
For $p\in\Pcal_d$ with $p>d+1$,
\begin{equation}\label{eq:exact-valuation}
 |B_p^-|=p^{v_p(G_p)}.
\end{equation}
\end{proposition}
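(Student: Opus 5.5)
We identify $|B_p^-|$ with the $p$-part of a product of $\omega$-eigenspace orders and use the Mazur--Wiles main conjecture (Herbrand--Ribet in full order form). For odd $j$ with $3\le j\le p-2$, the analytic class number formula combined with Mazur--Wiles gives
\[
 |A_j|=p^{v_p(B_{1,\omega^{-j}})},
\]
where the valuation is taken in $\Z_p$ after embedding. Since $N$ is odd and every $a\in U_d$ is odd, each index $aN$ is odd. Moreover $N\ge3$ because $N>1$ is odd, so every index satisfies $3\le aN\le (d-1)N<p-1$. The block $B_p^-$ is therefore a direct sum of odd eigenspaces to which this formula applies, and
\[
 |B_p^-|=p^{\sum_{a\in U_d}v_p(B_{1,\omega^{-aN}})}.
\]

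Next we match the characters. The character $\omega^N$ on $(\Z/p\Z)^\times$ has exact order $d$, with values in $\mu_d\subset\Z_p$. Fixing an embedding $\overline{\Q}\hookrightarrow\overline{\Q}_p$, it corresponds to a complex character $\psi$ of exact order $d$; by the discussion before \cref{prop:local-law} this is $\chi_{\mathfrak p}$ for the prime $\mathfrak p$ cut out by the embedding. The map $a\mapsto\omega^{aN}$ for $a\in U_d$ then runs over the $\varphi(d)$ characters of exact order $d$. Hence
\[
 \{B_{1,\omega^{-aN}}\}_{a\in U_d}
\]
is exactly the image of $\{B_{1,\psi_p^{-a}}\}_{a\in U_d}$ under the embedding, up to permutation. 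This uses that $G_p$ is independent of the choice of $\psi_p$, as noted for $W_p$.

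By \cref{lem:Bernoulli-norm}, each $B_{1,\psi_p^{-a}}$ lies in $\Z[\zeta_d]$. Because $p$ splits completely in $K$, the $\varphi(d)$ embeddings $K\hookrightarrow\Q_p$ correspond to the primes $\mathfrak p\mid p$, and
\[
 v_p(G_p)=\sum_{\sigma}v_p\bigl(\sigma(B_{1,\psi_p^{-1}})\bigr)=\sum_{a\in U_d}v_p(B_{1,\omega^{-aN}}).
\]
Combining this with the previous display gives \eqref{eq:exact-valuation}.

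The main obstacle is justifying the order formula $|A_j|=p^{v_p(B_{1,\omega^{-j}})}$, as opposed to the mere equivalence $A_j\neq0\iff p\mid B_{1,\omega^{-j}}$. Ribet and Herbrand alone give only that equivalence. The exact order requires either Mazur--Wiles, or the more elementary route: the relative class number formula expresses $h^-$ as $2p\prod_{\chi\text{ odd}}(-\tfrac12B_{1,\chi})$, which gives the total order of $A^-$, and Thaine/Kolyvagin–Rubin Euler systems give $|A_j|\le p^{v_p(B_{1,\omega^{-j}})}$ for each $j$. Summing the upper bounds over all odd $j\neq1$ and comparing with the class number formula forces equality in each component, and in particular for $j=aN$. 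I would present this as the Euler-system upper bound plus the class number formula squeeze, citing Washington for both.
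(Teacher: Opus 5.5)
Your proof is correct, but it takes a genuinely different and much heavier route than the paper. You use the componentwise formula $|A_j|=p^{v_p(B_{1,\omega^{-j}})}$ for odd $3\le j\le p-2$. This is Mazur--Wiles, or equivalently the Euler-system upper bound combined with the class-number squeeze you sketch. You then identify $\{B_{1,\omega^{-aN}}\}_{a\in U_d}$ with the $p$-adic images of the conjugates of $B_{1,\psi_p^{-1}}$. Both steps check out, including the parity and range of the indices $aN$ and the computation of $v_p(G_p)$ through an embedding into $\Q_p$.

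The paper never needs individual eigenspace orders. Its key observation is that $B_p^-$ is the sum over a full $\Gal(\Q(\zeta_d)/\Q)$-orbit of characters (all characters of exact order $d$), so its order can be read off from class numbers of subfields.

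\begin{itemize}
\item Writing $d=2^st$ with $t$ odd, the paper works with the subfields $L_e\subset\Q(\zeta_p)$ of degree $e=2^su$, $u\mid t$. These are CM because $(p-1)/e$ is odd.
\item Descent identifies $A(L_e)$ with $\bigoplus_j A_{j(p-1)/e}$.
\item The relative class number formula for $L_e$, where $w(L_e)=2$ and $Q(L_e)$ is a $p$-unit, expresses $v_p(h^-(L_e))$ as a sum over odd characters of orders $2^sv$ with $v\mid u$.
\item M\"obius inversion over $u\mid t$ then isolates the exact-order-$d$ contribution, giving $b_d=g_d$.
\end{itemize}

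The paper's argument is therefore essentially elementary: it uses only the analytic class number formula and Galois descent. Yours is shorter to state but rests on the main conjecture. In exchange, yours gives finer information than the proposition needs: each $|A_{aN}|$ individually equals $p^{v_p(B_{1,\omega^{-aN}})}$, not just the block total.

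One small point of attribution: Thaine's theorem concerns even components via cyclotomic units. The upper bound for odd components comes from Mazur--Wiles or from the Euler-system proof of the main conjecture (Kolyvagin--Rubin).
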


\begin{proof}
Write $d=2^s t$ with $t$ odd, and put $L=\Q(\zeta_p)$.
For $e=2^s u$, $u\mid t$, let $L_e$ be the degree-$e$ subfield
of $L$ and put $H_e=\Gal(L/L_e)$.
Since $[L:L_e]=(p-1)/e$ is odd, $L_e$ is a CM field.
For a number field $M$, write $A(M)$ for its $p$-primary class group;
in particular, $A(L)=A$.

Let $j_e$ and $N_e$ be extension and norm on class groups.
They satisfy
\[
 N_ej_e=[L:L_e],\qquad
 j_eN_e=\sum_{\sigma\in H_e}\sigma.
\]
Multiplication by $[L:L_e]$ is invertible on $p$-groups.
The first identity makes $j_e$ injective; the second makes its image
equal to $A^{H_e}$.  Hence
\begin{equation}\label{eq:descent}
 A(L_e)\simeq A^{H_e}
 =\bigoplus_{j=0}^{e-1}A_{j(p-1)/e}.
\end{equation}
The same argument for $L_e/L_e^+$ identifies $A(L_e^+)$
with the even part.  Thus
\begin{equation}\label{eq:relative-valuation}
 v_p(h^-(L_e))
 =\sum_{\substack{1\le j<e\\j\text{ odd}}}
      v_p(|A_{j(p-1)/e}|),
 \qquad h^-(L_e)=\frac{h(L_e)}{h(L_e^+)}.
\end{equation}

For $f=2^s v$, $v\mid t$, set
\[
 b_f=\sum_{a\in U_f}v_p(|A_{a(p-1)/f}|),\qquad
 g_f=v_p\!\left(\prod_{\ord(\chi)=f}B_{1,\chi}\right),
\]
where the product is over characters modulo $p$ of exact order $f$.
A character of $L_e$ is odd precisely when its order $f$ satisfies
$f\mid e$ and $e/f$ odd.  Grouping \eqref{eq:relative-valuation}
by $f$ gives
\begin{equation}\label{eq:class-sum-by-order}
 v_p(h^-(L_e))=\sum_{v\mid u}b_{2^s v}.
\end{equation}
The relative class-number formula \cite[Theorem~4.17]{Washington} is
\[
 h^-(L_e)=Q(L_e)w(L_e)
           \prod_{\chi\in X^-(L_e)}\left(-\frac12B_{1,\chi}\right).
\]
Here $Q(L_e)\in\{1,2\}$ is the Hasse unit index and $w(L_e)$ is
the number of roots of unity.  Since $e\le d<p-1$, $L_e$ is
a proper subfield of $\Q(\zeta_p)$, so $w(L_e)=2$.
All prefactors are $p$-units.  Grouping the product by character
order therefore gives
\begin{equation}\label{eq:Bernoulli-sum-by-order}
 v_p(h^-(L_e))=\sum_{v\mid u}g_{2^s v}.
\end{equation}
Comparison for all $u\mid t$, followed by M\"obius inversion,
yields $b_d=g_d$.  The definitions give
$b_d=v_p(|B_p^-|)$ and $g_d=v_p(G_p)$, proving
\eqref{eq:exact-valuation}.
\end{proof}

The reflection theorem \cite{Leopoldt,Washington} gives
\[
 r_p(A_{p-aN})\le r_p(A_{aN})\qquad(a\in U_d).
\]
Together with \cref{prop:valuation}, this gives
\begin{equation}\label{eq:block-rank-bound}
 r_p(B_p^+)\le r_p(B_p^-)\le v_p(G_p).
\end{equation}

\begin{proof}[Proof of \cref{th1.1}]
By \cref{thm:weak-law,lem:Bernoulli-norm}, the empirical law of
$G_p/p^m$ converges to the law of $Y/\pi^n$.
The latter is atomless by \cref{prop:atomless}.  Since $G_p\in\Z_{>0}$,
\[
 v_p(G_p)\ge m\quad\Longleftrightarrow\quad G_p/p^m\in\Z_{>0}.
\]
The set $\Z_{>0}$ is closed and has measure zero under the law of
$Y/\pi^n$.  The Portmanteau theorem
\cite[Theorem~2.1]{BillingsleyConvergence} therefore implies that
this event has relative density zero.  Consequently,
\[
 S_d=\{p\in\Pcal_d:p>d+1,\ v_p(G_p)\le m-1\}
\]
has relative density one.  For $p\in S_d$, \eqref{eq:exact-valuation} gives
\[
 |B_p^-|=p^{v_p(G_p)}\le p^{m-1}.
\]
The even-block rank bound then follows by reflection, as in
\eqref{eq:block-rank-bound}:
\[
 r_p(B_p^+)\le r_p(B_p^-)\le v_p(|B_p^-|)\le m-1.
\]
\end{proof}

\begin{proof}[Proof of \cref{cor1.2}]
For $d=4,6$, $m=1$, so $|B_p^-|=1$ and $r_p(B_p^+)=0$
on the density-one set in \cref{th1.1}; both blocks are trivial.
For $d=8,10,12$, $m=2$, so $|B_p^-|\le p$.
Thus $B_p^-$, and hence each of its direct summands, is trivial
or isomorphic to $\Z/p\Z$.
\end{proof}

\appendix
\section{The component \texorpdfstring{$A_{34}$}{A34}}\label{app:A34-computation}

The factorization
\begin{equation}\label{eq:B34-factorization}
 B_{34}=\frac{2577687858367}{6}
       =\frac{17\cdot151628697551}{6}
\end{equation}
reduces the proof to one prime.  Indeed, for $p\ge37$, reflection
and Herbrand's theorem \cite{Washington} give
\[
 A_{34}\ne0\ \Longrightarrow\ A_{p-34}\ne0
 \ \Longrightarrow\ p\mid\operatorname{num}(B_{34}).
\]
Thus the only possible exception is $p_0=151628697551$;
see also \cite[Section~3, Remark~1]{Q}.
All odd primes below $37$ are regular, so $A=0$ for those primes.

We now fix
\begin{equation}\label{eq:A34-parameters}
 p=p_0,\qquad q=12p+1=1819544370613,\qquad z=4096.
\end{equation}
Here $p$ and $q$ are prime, and $z$ has order $p$ modulo $q$.
Let $e(a)$ be the integer with $1\le e(a)<p$ and
$e(a)\equiv a^{-34}\pmod p$, and put
\begin{equation}\label{eq:A34-test}
 T=\left(\prod_{a=1}^{(p-1)/2}
          (z^a-z^{-a})^{e(a)}\right)^{12}\in\F_q^\times.
\end{equation}
The cyclotomic-unit criterion of \cite[Section~4]{BuhlerHarvey}
(see also \cite[Section~8.3]{Washington}) gives
\begin{equation}\label{eq:A34-criterion}
 T\ne1\quad\Longrightarrow\quad A_{34}=0.
\end{equation}
The exponent $a^{p-1-34}$ in that criterion may be reduced modulo $p$,
since $12p=q-1$.

To evaluate the product, we use doubling orbits as in
\cite[Section~6]{BuhlerHarvey}.  The relevant orders are
\begin{equation}\label{eq:A34-orbit-data}
 t=\ord_p(2)=891933515,\qquad
 \ord_p(7)=p-1=170t.
\end{equation}
Since $t$ is odd, the subgroup $\langle2,-1\rangle$ has index $85$
in $\F_p^\times$.  Consequently,
\begin{equation}\label{eq:A34-representatives}
 \{7^j2^i:0\le j<85,\ 0\le i<t\}
\end{equation}
contains exactly one representative of each pair $\{a,-a\}$.
Replacing $a$ by $-a$ changes its factor only by a sign, since
$e(-a)=e(a)$.  The final twelfth power removes this sign, so these
representatives give the same value of $T$ as \eqref{eq:A34-test}.

For a block starting at $a_0$, the function \texttt{block} initializes
\[
 x=z^{a_0},\qquad y=z^{-a_0},\qquad e=e(a_0),\qquad V=1.
\]
With $u\equiv2^{-34}\pmod p$, each iteration multiplies $V$ by
$(x-y)^e$ and then makes the updates
\begin{equation}\label{eq:A34-recurrence}
 x\leftarrow x^2,\qquad y\leftarrow y^2,\qquad
 e\leftarrow ue\bmod p.
\end{equation}
Here $x,y,V$ are computed modulo $q$, and $e$ is kept between $1$
and $p-1$.

Put $s=5000000$ and $h=\lceil t/s\rceil=179$.
Each of the $85$ orbits is divided into $h$ consecutive blocks.
For $0\le j<85$ and $0\le b<h$, the block at offset $bs$ in orbit $j$
starts at $a_0=7^j2^{bs}\pmod p$ and has $\min(s,t-bs)$ terms.
The function \texttt{job} assigns these blocks the indices $jh+b+1$.
Thus the $85h=15215$ blocks cover all
$85t=75814348775=(p-1)/2$ factors exactly once.
The accompanying PARI/GP program \texttt{verify\_A34.gp}, available at
\url{https://github.com/taozhy07/vandiver-a34-verification},
evaluates the blocks in parallel with \texttt{parvector} and combines
them with \texttt{vecprod}.  It prints \texttt{PASS} if the twelfth
power of the resulting product is not $1$ modulo $q$, and
\texttt{INCONCLUSIVE} otherwise. The complete calculation can be run from the directory
containing the file with
\begin{verbatim}
  gp -f -q verify_A34.gp
\end{verbatim}
On a 2021 MacBook Pro with an Apple M1 Pro chip and $16$\,GB of RAM,
the complete run took approximately three hours and returned
\texttt{PASS}.

\begin{proof}[Proof of \cref{thm:A34-vanishing}]
The output \texttt{PASS} gives $T\ne1$, so
\eqref{eq:A34-criterion} implies $A_{34}=0$ when $p=p_0$.
The reduction above covers every other odd prime, completing the proof.
\end{proof}

\section*{Acknowledgements}
The first author is supported by NSFC 12231009. The second author is supported by the Natural Science Foundation of Anhui Province (Grant No. 2508085QA017).

\end{document}